\newtheorem{theorem}{Theorem}[section]
\newtheorem{defi}[theorem]{Definition}
\newtheorem{prop}[theorem]{Proposition}
\newtheorem{coro}[theorem]{Corollary}
\begin{document} 
\title{Interor and $\mathfrak h$ Operators on the Category of Locales}
\author{Joaqu\'in Luna-Torres }
\thanks{Programa de Matem\'aticas, Universidad Distrital Francisco Jos\'e de Caldas,  Bogot\'a D. C., Colombia (retired professor)}

\email{jlunator@fundacionhaiko.org}
\subjclass{06D22; 18B35; 18F70}
\keywords{ Frame, Locale, Sublocale, Interior operator, $\mathfrak h$ operator, Topological category}
\begin{abstract}
We present the concept of interior operator $I$ on the category  $\mathbf{Loc}$ of locales and then we construct a topological category \linebreak  $\big(\mathbf{I\text{-}Loc},\ U\big)$, where $U:\mathbf{I\text{-}Loc}\rightarrow \mathbf{Loc}$ is a forgetful functor;   and we also introduce the notion of $\mathfrak h$ operator on the category  $\mathbf{Loc}$  and discuss some of their properties for constructing the topological  category $\big(\mathbf{\mathfrak h\text{-}Loc},\ U\big)$ associated to the  forgetful functor $U:\mathbf{\mathfrak h\text{-}Loc}\rightarrow \mathbf{Loc}$.

\end{abstract}
\maketitle 
\baselineskip=1.7\baselineskip
\section*{0. Introduction}
Kuratowski operators (closure, interior, exterior, boundary and others)
have been used intensively in General Topology (\cite{Du}, \cite{K1}, \cite{K2}). For a topological space it is well-known that, for example, the associated closure and interior operators provide equivalent descriptions of the topology; but this is not always true in other categories, consequently it makes sense to define and study separately these operators. In this context, we study  an interior operator $I$ on the the coframe $\mathcal{S}_\text{{\bsifamily{l}}}(L)$ of sublocales of every  object  $L$ in the category $\mathbf{Loc}$.

On the other hand, a new topological operador $\mathfrak h$ was introduced by M. Suarez \cite{MSM}  in order to complete a Boolean algebra with all topological operators  in General Topology. Following his ideas, we study an operator $\mathfrak h$ on the  colection $\mathcal{S}_\text{{\bsifamily{l}}}^{c}(L)$ of all complemented sublocales of  every  object  $L$ in the category $\mathbf{Loc}$.
 
The paper is organized as follows, we begin presenting, in section 1,  the basic concepts of Heyting algebras, Frames, locales, sublocales, images and preimages of sublocales for the morphisms of $\mathbf{Loc}$ and the notions of closed and open sublocales; these notions can be found in  Picado and Pultr \cite{PP} and A. L. Suarez \cite{ALS}, In section 2, we present the concept of interior operator $I$ on the category  $\mathbf{Loc}$ and then we construct a topological category  $\big(\mathbf{I\text{-}Loc},\ U\big)$, where $U:\mathbf{I\text{-}Loc}\rightarrow \mathbf{Loc}$ is a forgetful functor. 
Finally in section 3 we present the notion of $\mathfrak h$ operator on the category  $\mathbf{Loc}$  and discuss some of their properties for constructing the topological  category $\big(\mathbf{\mathfrak h\text{-}Loc},\ U\big)$ associated to the  forgetful functor $U:\mathbf{\mathfrak h\text{-}Loc}\rightarrow \mathbf{Loc}$.

\section{Preliminaries}
For a comprehensive account on the the categories of frames and locales we refer to Picado and Pultr \cite{PP} and A. L. Suarez \cite{ALS}, from whom we take the following useful facts.
\subsection{Heyting algebras}
A bounded lattice $L$ is called a Heyting algebra if there is a binary
operation $x \rightarrow y$ (the Heyting operation) such that for all $a, b, c$ in $L$,
\[
a \land b \leqslant c\,\ \text{iff}\,\  a \leqslant b \rightarrow c.
\]
Thus  for every $b \in L$ the mapping $b \rightarrow (-) : L \rightarrow L$ is a right adjoint to $(-) \land b : L \rightarrow L$ and hence, if it exists, is uniquely determined.

In a complete Heyting algebra we have $(\bigvee A) \land b = \bigvee_{a\in A}(a \land b)$ for any $A\subseteq L$, $b\rightarrow (\bigvee A) = \bigvee_{a\in A}(b \rightarrow a)$, and $(\bigvee A) \rightarrow b = \bigvee{a\in A}(a \rightarrow b)$.
\subsection{Frames}
A {\it frame} is a complete lattice $L$ satisfying the distributive law
\[
\big(\bigvee A\big)\land b =\bigvee \{ a\land b\mid a\in A\}
\]
for all $A \subseteq L$ and $b\in L$ (hence a complete Heyting algebra); a {\it frame homomorphism} preserves all joins and all finite meets.

The lattice $\Omega(X)$ of all open subsets of a topological space $X$ is an example of a frame, and if $f: X \rightarrow Y$ is continuous we obtain a frame homomorphism 
$\Omega( f ): \Omega(Y )\rightarrow \Omega (X)$ by setting $\Omega(f)(U) = f^{-1}[U]$. Thus we have a contravariant functor  $\Omega : \mathbf{Top} \rightarrow \mathbf{Frm}^{op} $ from the category of topological spaces into that of frames. \subsection{Locales}
The adjunction $\Omega : \mathbf{Top} \rightarrow \mathbf{Frm}^{op} $, \,\
$\mathbf{pt}:\mathbf{Frm}^{op}\rightarrow \mathbf{Top}$ with $\Omega  \dashv \mathbf{pt}$, connects the categories of frames with that of topological spaces. The functor $\Omega$  assigns to each space its lattice of opens, and $\mathbf{pt}$ assigns to a frame $L$ the collection of the frame maps $f : L \rightarrow 2$, topologized by setting the opens to be exactly the sets of the form 
$ \{f: L \rightarrow 2 \mid f(a) = 1\}$ for some $a\in L$.

 A frame $L$ is {\it spatial} if for $a, b \in L$ whenever $a\nleqslant b$ there is some frame map $f: L \rightarrow 2$ such that $f(a) = 1\ne f(b)$. Spatial frames are exactly those of the form $\Omega(X)$ for some space $X$.

 A space is {\it sober} if every irreducible closed set is the closure of a unique point. Sober spaces are exactly those of the form $\mathbf{pt}(L)$ for some frame $L$.
 
The adjunction $\Omega\dashv \mathbf{pt}$ restricts to a dual equivalence of categories between spatial frames and sober spaces
 
The category of sober spaces is a full reflective subcategory of $\mathbf{Top}$. For each space $X$ we have a sobrification map 
$N : X\rightarrow  \mathbf{pt}(\Omega(X))$ mapping each point $x\in  X$ to the map
$f_x :(X) \rightarrow 2$ defined as $f(U) = 1$ if and only if $x\in  U$.

The category of {\it spatial frames} is a full reflective subcategory of $\mathbf{Frm}$. For each frame we have a spatialization map $\phi : L \rightarrow \Omega(\mathbf{pt}(L))$ which sends each $a\in L$ to $\{f : L \rightarrow 2 \mid f(a) = 1\}$.

This justifies to view the dual category $\mathbf{Loc} =\mathbf{Frm}^{op}$ as an extended category of spaces; one speaks of the category of {\it locales}.

Maps in the category of locales have a concrete description: they can be characterized as the right adjoints of frame maps (since frame maps preserve all joins, they always have right adjoints).
\subsubsection{\bf{Sublocales}}
A {\it sublocale} of a locale $L$ is a subset $S\subseteq L$ such that it is closed under arbitrary meets, and such that $s\in S$ implies $x\rightarrow s \in S$ for every $x \in L$. This is equivalent to $S\subseteq L$ being a locale in the inherited order, and the subset inclusion being a map in $\mathbf{Loc}$.

Sublocales of $L$ are closed under arbitrary intersections, and so the collection 
 $\mathcal{S}_\text{{\bsifamily{l}}}(L)$ of all sublocales of $L$, ordered under set inclusion, is a complete lattice.  The join of sublocales is (of course) not the union, but we have a very simple formula 
   $\bigvee_{i} S_{i} = \{\bigvee M \mid M \subseteq\bigcup_{i} S_{i}\}$.
   
 In the coframe $\mathcal{S}_\text{{\bsifamily{l}}}(L)$ the bottom element is the sublocale $\{1\}$ and the top element is $L$.

\subsubsection{\bf{ Images and Preimages of sublocales}}
Let $f: L\rightarrow M$  be a localic map and if $S \subseteq L$ is a sublocale then the standard set-theoretical image $f [S]$ is also a sublocale The set-theoretic preimage $f^{-1}[T]$ of a sublocale $T\subseteq M$ is not necessarily a sublocale of $L$. To obtain a concept of a preimage suitable for our purposes we will, first, make the following observation: ``Let $A\subseteq L$ be a subset closed under meets. Then $\{1\} \subseteq A$ and if $S_i \subseteq A$ for $i \in J$ then $\bigwedge_{i\in J} S_i\subseteq A$''.
Consequently there exists the largest sublocale contained in  $A$. It will be denoted by $A_{sloc}$. 

The set-theoretic preimage $f^{-1}[T]$ of a sublocale $T$ is closed under meets \big(indeed, $f(1) = 1$, and if $x_i \in f^{-1}[T])$ then $f(x_i) \in T$,  and hence
$ f(\bigwedge_{i\in J} x_i)=\bigwedge_{i\in J} f(x_i)$ belongs to $T$ and $\bigwedge_{i\in J} x_i\in f^{-1}[T]$ \big) and we have the sublocale 
$f_{-1}[T]:= f^{-1}[T]_{sloc}$. It will be referred to as {\it the preimage} of $T$, and we shall sat that $f_{-1}[-]$ is {\it the preimage function} of $f$.
 
For every localic map  $f: L \rightarrow M$, the preimage function $f_{-1}[-] $ 
is a right Galois adjoint of the image function $f [-] :\mathcal{S}_\text{{\bsifamily{l}}}(L)\rightarrow \mathcal{S}_\text{{\bsifamily{l}}}(M)$.

\subsubsection{\bf{ Closed and Open sublocales}}\label{open}
Embedded in  $\mathcal{S}_\text{{\bsifamily{l}}}(L)$ we have the coframe of {\it closed sublocales} which is isomorphic to $L^{op}$.
The closed sublocale $\mathfrak c(a) \subseteq L$ is defined to be $\uparrow a$ for $a \in L$.

Embedded in  $\mathcal{S}_\text{{\bsifamily{l}}}(L)$ we also have the frame of open sublocales which is isomorphic to $L$. The open sublocale is defined to be 
$\{a \rightarrow x \mid x \in L\}$ for $a \in L$.

The sublocales $\mathfrak o(a)$  and $\mathfrak c(a)$  are complements of one another in the coframe  $\mathcal{S}_\text{{\bsifamily{l}}}(L)$ for any element $a\in L$.
Furthermore, open and closed sublocales generate the coframe  $\mathcal{S}_\text{{\bsifamily{l}}}(L)$ in the sense that for each \linebreak $S \in \mathcal{S}_\text{{\bsifamily{l}}}(L)$ we have $S = \bigcap\{\mathfrak o(x) \cup \mathfrak c(y) \mid  S \subseteq \mathfrak o(x) \cup \mathfrak c(y)\}$.

A pseudocomplement of an element $a$ in a meet-semilattice $L$ 
 with $0$ is the largest element $b$ such that $b\land a = 0$, if it exists. It is usually denoted by $\neg a$.  Recall  that in a Heyting algebra $H$ the pseudocomplement can be expressed as $\neg x= x\rightarrow  0$.
 
\section{Interior Operators}
We shall be conserned in this section with a version on locales of the interior operator studied in \cite{LO}.

Before stating the next definition, we need to observe that since for  localic maps $f: L \rightarrow M$ and $g:M\rightarrow N$:
\begin{itemize}
\item the preimage function $f_{-1}[-] $ 
is a right Galois adjoint of the image function $f [-] :\mathcal{S}_\text{{\bsifamily{l}}}(L)\rightarrow \mathcal{S}_\text{{\bsifamily{l}}}(M)$;
\item $ g [-]\circ f [-]=(g\circ f) [-]$.
\end{itemize}
Therefore $g_{-1}[-]\circ f_{-1}[-]= (g\circ f)_{-1}[-]$ because given two adjunctions the composite functors yield an adjunction.
 \begin{defi}
 An interior operator $I$ of the category  $\mathbf{Loc}$ is given by a family $I =(i_{\text{\tiny{$L$}}})_{\text{$L\in \mathbf{Loc}$}}$ of maps $i_{\text{\tiny{$L$}}}:\mathcal{S}_\text{{\bsifamily{l}}}(L)\rightarrow \mathcal{S}_\text{{\bsifamily{l}}}(L)$ such that
 \begin{itemize}
 \item[($I_1)$] $\left(\text{Contraction}\right)$\,\  $i_{\text{\tiny{$L$}}}(S)\subseteq S$ for all $S \in \mathcal{S}_\text{{\bsifamily{l}}}(L)$;
 \item[($I_2)$] $\left(\text{Monotonicity}\right)$\,\  If $S\subseteq T$ in $\mathcal{S}_\text{{\bsifamily{l}}}(L)$, then $i_{\text{\tiny{$L$}}}(S)\subseteq i_{\text{\tiny{$L$}}}(T)$
 \item[($I_3)$] $\left(\text{Upper bound}\right)$\,\  $i_{\text{\tiny{$L$}}}(L)=L$.
 \end{itemize}
 \end{defi}
 \begin{defi}
 An $I$-space is a pair $(L,i_{\text{\tiny{$L$}}})$ where $L$ is an object of $\mathbf{Loc}$ and $i_{\text{\tiny{$L$}}}$ is an interior operator on $L$.
 \end{defi}
 \begin{defi}
 A morphism $f:L\rightarrow M$ of  $\mathbf{Loc}$ is said to be $I$-continuous if 
 \begin{equation}\label{conti}
 f_{-1}\left[ i_{\text{\tiny{$M$}}}(T)\right]\subseteq i_{\text{\tiny{$L$}}}\left( f_{-1}[T]\right)
 \end{equation}
 for all $T\in \mathcal{S}_\text{{\bsifamily{l}}}(M)$. Where $f_{-1}[-]$ is the preimage  of $f[-]$.
 \end{defi} 
 
 \begin{prop}
 Let $f:L\rightarrow M$ and $g:M\rightarrow N$ be two $I$-continuous morphisms  of  $\mathbf{Loc}$ then $g\centerdot f$ is an $I$-continuous morphism of  $\mathbf{Loc}$.
 \end{prop}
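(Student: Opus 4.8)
The plan is to chain the two $I$-continuity inequalities together, using the composition law for preimage functions recorded just before the definition. Fix an arbitrary sublocale $U\in \mathcal{S}_\text{{\bsifamily{l}}}(N)$; the goal is the inclusion
\[
(g\centerdot f)_{-1}\big[i_{\text{\tiny{$N$}}}(U)\big] \subseteq i_{\text{\tiny{$L$}}}\big((g\centerdot f)_{-1}[U]\big).
\]
Since the image functions compose as $g[-]\circ f[-]=(g\centerdot f)[-]$ and passage to right Galois adjoints reverses composition, the preimage functions satisfy $(g\centerdot f)_{-1}[-]=f_{-1}\big[g_{-1}[-]\big]$ (the type-correct reading of the composition law above: both sides send $\mathcal{S}_\text{{\bsifamily{l}}}(N)$ into $\mathcal{S}_\text{{\bsifamily{l}}}(L)$, via $g_{-1}$ landing in $\mathcal{S}_\text{{\bsifamily{l}}}(M)$ and then $f_{-1}$). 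I will use this to split the outer preimage into two stages.

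First I would apply the $I$-continuity of $g$ to $U$, obtaining $g_{-1}\big[i_{\text{\tiny{$N$}}}(U)\big]\subseteq i_{\text{\tiny{$M$}}}\big(g_{-1}[U]\big)$. Then I would transport this inclusion through $f_{-1}[-]$. Here the one genuinely necessary observation enters: $f_{-1}[-]$ is monotone, because it is a right Galois adjoint and an adjoint between posets always preserves order. Monotonicity then gives
\[
f_{-1}\big[g_{-1}[i_{\text{\tiny{$N$}}}(U)]\big]\subseteq f_{-1}\big[i_{\text{\tiny{$M$}}}(g_{-1}[U])\big].
\]

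Next I would apply the $I$-continuity of $f$ to the sublocale $T:=g_{-1}[U]\in \mathcal{S}_\text{{\bsifamily{l}}}(M)$, which yields $f_{-1}\big[i_{\text{\tiny{$M$}}}(g_{-1}[U])\big]\subseteq i_{\text{\tiny{$L$}}}\big(f_{-1}[g_{-1}[U]]\big)$. Concatenating the two displayed inclusions and rewriting both endpoints by the composition law $f_{-1}\big[g_{-1}[-]\big]=(g\centerdot f)_{-1}[-]$ produces exactly the desired inclusion. There is no real obstacle: the argument is a two-step chase, and the only point requiring care is to invoke monotonicity of $f_{-1}[-]$ at the right moment, since without it one cannot carry the inequality for $g$ inside the outer preimage.
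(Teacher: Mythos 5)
Your proof is correct and follows essentially the same route as the paper's: apply the $I$-continuity of $g$, push the inclusion through $f_{-1}[-]$, apply the $I$-continuity of $f$ to $g_{-1}[U]$, and rewrite via $(g\centerdot f)_{-1}[-]=f_{-1}\big[g_{-1}[-]\big]$. If anything, you are slightly more careful than the paper, since you explicitly justify the monotonicity of $f_{-1}[-]$ (as a right Galois adjoint) and give the type-correct order of composition of the preimage functions, both of which the paper leaves implicit.
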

 \begin{proof}
 Since $g:M\rightarrow N$ is $I$-continuous, we have  $$g_{-1}\big[ i_{\text{\tiny{$N$}}}(S)\big]\subseteq i_{\text{\tiny{$M$}}}\big( g_{-1}[S]\big)$$
  for all $S\in \mathcal{S}_\text{{\bsifamily{l}}}(N)$, it fallows that
 $$f_{-1}\Big[g_{-1}\big[( i_{\text{\tiny{$N$}}}(S)\big]\Big]\subseteq f_{-1}\Big[ i_{\text{\tiny{$M$}}}\big( g_{-1}[S]\big)\Big];$$
  now,  by the $I$-continuity of $f$,$$ f_{-1}\Big[ i_{\text{\tiny{$M$}}} \big( g_{-1}[S]\big)\Big]\subseteq i_{\text{\tiny{$L$}}}\Big( f_{-1}\big[g_{-1}[S]\big]\Big),$$ 
therefore $$f_{-1}\Big[g_{-1}\big[ i_{\text{\tiny{$N$}}}(S)\big]\Big]\subseteq i_{\text{\tiny{$L$}}}\Big( f_{-1}\big[g_{-1}[S]\big]\Big),$$
  that is to say 
  $$(g\centerdot f)_{-1}\big[ i_{\text{\tiny{$N$}}}(S)\Big]\subseteq i_{\text{\tiny{$L$}}}\Big( (g\centerdot f)_{-1}[S]\Big)$$
\end{proof}
  As a consequence we obtain
 \begin{defi}
 The category $\mathbf{I\text{-}Loc}$ of $I$-spaces comprises the following data:
 \begin{enumerate}
 \item {\bf Objects}: Pairs $(L,i_{\text{\tiny{$L$}}})$ where $L$ is an object of $\mathbf{Loc}$ and $i_{\text{\tiny{$L$}}}$ is an interior operator on $L$.
\item {\bf Morphisms}: Morphisms of $\mathbf{Loc}$ which are $I$-continuous.
 \end{enumerate}
 \end{defi}

 \subsection{The lattice structure of all interior operators}
 For the category $\mathbf{Loc}$ we consider the collection
 \[
 Int(\mathbf{Loc})
 \]
 of all  interior operators on $\mathbf{Loc}$. It is ordered by
\[
 I\leqslant J \Leftrightarrow i_{\text{\tiny{$L$}}}(S)\subseteq j_{\text{\tiny{$L$}}}(S), \,\,\ \text{for all $S\in \mathcal{S}_\text{{\bsifamily{l}}}$ and all $L$  object of $\mathbf{Loc}$}. 
\]

 This way $Int(\mathbf{Loc})$ inherits a lattice structure from $\mathcal{S}_\text{{\bsifamily{l}}}$:
 
 \begin{prop}
 Every family $(I_{\text{\tiny{$\lambda$}}})_{\text{\tiny{$\lambda\in \Lambda$}}}$ in $Int(\mathbf{Loc})$ has a join $\bigvee\limits_{\text{\tiny{$\lambda\in \Lambda $}}}I_{\text{\tiny{$\lambda $}}}$ and a meet $\bigwedge\limits_{\text{\tiny{$\lambda\in \Lambda $}}}I_{\text{\tiny{$\lambda $}}}$ in $Int(\mathbf{Loc})$. The discrete interior operator
 \[ I_{\text{\tiny{$D$}}}=({i_{\text{\tiny{$D$}}}}_{\text{\tiny{$L$}}})_{\text{$L\in \mathbf{Loc}$}}\,\,\ \text{with}\,\,\ {i_{\text{\tiny{$D$}}}}_{\text{\tiny{$L$}}}(S)=S\,\,\ \text{for all}\,\ S\in \mathcal{S}_\text{{\bsifamily{l}}}
 \]
  is the largest element in $Int(\mathbf{Loc})$, and the trivial interior operator
  \[ 
I_{\text{\tiny{$T$}}}=({i_{\text{\tiny{$T$}}}}_\text{\tiny{$L$}})_{\text{$L\in \mathbf{Loc}$}}\,\,\ \text{with}\,\,\ {i_{\text{\tiny{$T$}}}}_{\text{\tiny{$L$}}}(S)=
  \begin{cases}
\{1\}& \text{for all}\,\ S\in \mathcal{S}_\text{{\bsifamily{l}}},\,\ S\ne L\\
L&\text {if}\,\ S=L
\end{cases}
 \]
is the least one.
\end{prop}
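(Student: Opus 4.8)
The plan is to define both the join and the meet pointwise, exploiting the fact that each $\mathcal{S}_\text{{\bsifamily{l}}}(L)$ is itself a complete lattice (indeed a coframe), and then to check that the pointwise constructions again satisfy axioms $(I_1)$--$(I_3)$. Concretely, for a family $(I_{\text{\tiny{$\lambda$}}})_{\text{\tiny{$\lambda\in\Lambda$}}}$ I would set
\[
\Big(\bigvee_{\lambda} I_{\lambda}\Big)_L(S) := \bigvee_{\lambda}(i_{\lambda})_L(S) \quad\text{and}\quad \Big(\bigwedge_{\lambda} I_{\lambda}\Big)_L(S) := \bigcap_{\lambda}(i_{\lambda})_L(S),
\]
where the join on the right is taken in $\mathcal{S}_\text{{\bsifamily{l}}}(L)$ and the meet is the intersection. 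Since the order on $Int(\mathbf{Loc})$ is itself defined pointwise, over all $L$ and all $S$ simultaneously, there is no coherence condition linking the values of an operator at different locales, so the whole problem factors through the pointwise lattice structure of the $\mathcal{S}_\text{{\bsifamily{l}}}(L)$ and no genuine obstruction arises.

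First I would verify that these pointwise formulas yield interior operators. For the join, contraction follows because each $(i_{\lambda})_L(S)\subseteq S$ makes $S$ an upper bound of the family, whence $\bigvee_{\lambda}(i_{\lambda})_L(S)\subseteq S$; monotonicity and $(I_3)$ pass through the join termwise. The meet case is dual: the intersection lies below each $(i_{\lambda})_L(S)\subseteq S$, monotonicity is inherited, and $\bigcap_{\lambda} L = L$. Next I would confirm that these candidates are actually the least upper bound and the greatest lower bound for the pointwise order, which is immediate: any operator dominating all $I_{\lambda}$ must, at each pair $(L,S)$, contain every $(i_{\lambda})_L(S)$ and hence their join, and symmetrically for the meet.

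It remains to identify the extremal elements. That $I_{\text{\tiny{$D$}}}$ with ${i_{\text{\tiny{$D$}}}}_L(S)=S$ satisfies the axioms is trivial, and $(I_1)$ gives $i_L(S)\subseteq S={i_{\text{\tiny{$D$}}}}_L(S)$ for every interior operator $I$, so $I_{\text{\tiny{$D$}}}$ is the top. For $I_{\text{\tiny{$T$}}}$, minimality follows from the fact that $\{1\}$ is the bottom of $\mathcal{S}_\text{{\bsifamily{l}}}(L)$ together with $(I_3)$ forcing $i_L(L)=L$ for every $I$, so that ${i_{\text{\tiny{$T$}}}}_L(S)\subseteq i_L(S)$ in both cases of its definition. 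The only place where the argument is not purely formal is the verification that $I_{\text{\tiny{$T$}}}$ respects monotonicity across the $S=L$ boundary: if $S\subseteq T$ with $T\ne L$ then necessarily $S\ne L$, so both values equal $\{1\}$; and if $T=L$ then ${i_{\text{\tiny{$T$}}}}_L(T)=L$ dominates everything. This mild case split is the one step I expect to require care; the rest of the proposition is a direct transfer of the complete-lattice structure of the sublocale coframes to the pointwise order.
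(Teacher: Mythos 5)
Your proposal is correct and follows essentially the same route as the paper: both define $\bigvee_{\lambda} I_{\lambda}$ and $\bigwedge_{\lambda} I_{\lambda}$ pointwise via the complete-lattice structure of each $\mathcal{S}_\text{{\bsifamily{l}}}(L)$ and verify the axioms $(I_1)$--$(I_3)$ termwise. If anything, you are more thorough than the paper, which checks only the join case in detail and dismisses the meet, $I_{\text{\tiny{$D$}}}$ and $I_{\text{\tiny{$T$}}}$ with ``similarly'' --- your explicit verification that the pointwise formulas are genuine suprema/infima for the order on $Int(\mathbf{Loc})$, and your case split for the monotonicity of $I_{\text{\tiny{$T$}}}$, fill in steps the paper leaves implicit.
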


\begin{proof}
For $\Lambda\ne\emptyset$, let $\widehat{I}=\bigvee\limits_{\text{\tiny{$\lambda\in\Lambda $}}}I_{\text{\tiny{$\lambda $}}}$, then 
 \[
 \widehat{i_{\text{\tiny{$L$}}}}=\bigvee\limits_{\text{\tiny{$\lambda\in \Lambda$}}} {i_{\text{\tiny{$\lambda $}}}}_{\text{\tiny{$L$}}},
 \]
 for all  $L$ object of $\mathbf{Loc}$, satisfies
\begin{itemize}
 \item $ \widehat{i_{\text{\tiny{$L$}}}}(S)\subseteq S$,  because ${i_{\text{\tiny{$\lambda $}}}}_{\text{\tiny{$L$}}}(S)\subseteq S$ for all $S\in \mathcal{S}_\text{{\bsifamily{l}}}$ and for all $\lambda \in \Lambda$.
\item If $S\leqslant T$ in $\mathcal{S}_\text{{\bsifamily{l}}}$ then ${i_{\text{\tiny{$\lambda $}}}}_{\text{\tiny{$L$}}}(S)\subseteq {i_{\text{\tiny{$\lambda $}}}}_{\text{\tiny{$L$}}}(T)$  for all $S\in \mathcal{S}_\text{{\bsifamily{l}}}$ and for all $\lambda \in \Lambda$, therefore $ \widehat{i_{\text{\tiny{$S$}}}}(S)\subseteq \widehat{i_{\text{\tiny{$L$}}}}(T)$.
 \item Since ${i_{\text{\tiny{$\lambda $}}}}_{\text{\tiny{$L$}}}(L)=L $  for all $\lambda \in \Lambda$, we have that  $ \widehat{i_{\text{\tiny{$L$}}}}(L)=L$.
 \end{itemize}
Similarly  $\bigwedge\limits_{\text{\tiny{$\lambda\in \Lambda $}}}I_{\text{\tiny{$\lambda $}}}$,\,\  $ I_{\text{\tiny{$D$}}}$ and $I_{\text{\tiny{$T$}}}$ are interior operators.
 \end{proof}
 \begin{coro}\label{complete}
  For  every  object $L$ of $\mathbf{Loc}$
  \[
  Int(L) = \{i_{\text{\tiny{$L$}}}\mid i_{\text{\tiny{$L$}}}\,\ \text{ is an interior operator on}\,\ L\}
  \]
  is a complete lattice.
 \end{coro}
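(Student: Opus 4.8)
The plan is to specialise the construction of the preceding proposition to a single object $L$ and then to read off completeness directly from the definition. Recall from the subsection on sublocales that $\mathcal{S}_\text{{\bsifamily{l}}}(L)$ is itself a complete lattice, and that $Int(L)$ carries the pointwise order, $i\leqslant j$ iff $i(S)\subseteq j(S)$ for every $S\in\mathcal{S}_\text{{\bsifamily{l}}}(L)$. To prove that $Int(L)$ is a complete lattice it suffices to produce, for an arbitrary family of interior operators on $L$, a least upper bound and a greatest lower bound that again satisfy $(I_1)$--$(I_3)$.

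First I would handle a nonempty family $(i_{\lambda})_{\lambda\in\Lambda}$ in $Int(L)$ by forming, for each $S$, its join and meet in the complete lattice $\mathcal{S}_\text{{\bsifamily{l}}}(L)$:
\[
u(S)=\bigvee_{\lambda\in\Lambda}i_{\lambda}(S),\qquad v(S)=\bigwedge_{\lambda\in\Lambda}i_{\lambda}(S).
\]
Both $u$ and $v$ are interior operators on $L$, by exactly the three checks performed in the proof of the preceding proposition: contraction holds because $S$ bounds the family $\{i_{\lambda}(S)\}$ from above (for $u$) and because $v(S)\subseteq i_{\lambda_{0}}(S)\subseteq S$ for any fixed $\lambda_{0}$ (for $v$); monotonicity is inherited term by term; and the upper-bound axiom follows from $i_{\lambda}(L)=L$, whence $u(L)=v(L)=L$. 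Comparing with the pointwise order shows at once that $u$ is the supremum and $v$ the infimum of the family in $Int(L)$.

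It then remains only to dispose of the empty family, and this is the one place that needs care: the ambient self-maps $S\mapsto\{1\}$ and $S\mapsto L$ fail $(I_3)$ and $(I_1)$ respectively, so neither is available as an empty join or empty meet. Instead these extremes are supplied by the two distinguished operators of the preceding proposition: the trivial operator $I_{\text{\tiny{$T$}}}$ read at $L$ is the least element of $Int(L)$ and hence the supremum of the empty family, while the discrete operator $I_{\text{\tiny{$D$}}}=\mathrm{id}$ is the greatest element and hence the infimum of the empty family. Thus every subset of $Int(L)$, empty or not, has both a least upper bound and a greatest lower bound, which is precisely the assertion that $Int(L)$ is a complete lattice. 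I anticipate no real difficulty beyond this empty-family bookkeeping, which is exactly why the preceding proposition was set up to exhibit the explicit top $I_{\text{\tiny{$D$}}}$ and bottom $I_{\text{\tiny{$T$}}}$: it shows that $Int(L)$, though a subposet of the complete lattice of all self-maps of $\mathcal{S}_\text{{\bsifamily{l}}}(L)$, is complete in its own right rather than as a subcomplete lattice of the latter.
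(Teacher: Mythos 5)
Your proposal is correct and follows essentially the same route as the paper: the paper's preceding proposition already verifies the pointwise joins and meets for nonempty families (its proof likewise begins ``For $\Lambda\ne\emptyset$'') and exhibits $I_{\text{\tiny{$D$}}}$ and $I_{\text{\tiny{$T$}}}$ as top and bottom, and the corollary is exactly the specialization of that proposition to a fixed $L$. Your explicit bookkeeping for the empty family via the top and bottom operators is precisely the detail the paper leaves implicit, and it is handled correctly.
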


 \subsection{Initial interior operators}
Let  $\mathbf{I\text{-}Loc}$ be the ctegory of $I$-spaces. Let  $(M,i_{\text{\tiny{$M$}}})$ be an object of  $\mathbf{I\text{-}Loc}$ and let $L$ be an object of $\mathbf{Loc}$. For each morphism  $f:L\rightarrow M$ in $\mathbf{Loc}$ we define on $L$ the operotor 
\begin{equation} \label{initial}
i_{\text{\tiny{$L_{f}$}}}:=f_{-1}\centerdot i_{\text{\tiny{$M$}}}\centerdot f_{*}.
\end{equation}
\begin{prop}\label{ini-cont}
The operator (\ref{initial}) is an interior operator on $L$ for which the morphism $f$ is $I$-continuous.
\end{prop}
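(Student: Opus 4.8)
The plan is to read $f_*$ as the right adjoint of the preimage function $f_{-1}[-]$, so that alongside the adjunction $f[-]\dashv f_{-1}[-]$ recorded above one also has $f_{-1}[-]\dashv f_*$ between $\mathcal{S}_{l}(M)$ and $\mathcal{S}_{l}(L)$. From this second adjunction I would extract the two inequalities I lean on throughout: the counit $f_{-1}[f_*(S)]\subseteq S$ for every $S\in\mathcal{S}_{l}(L)$, and the unit $T\subseteq f_*(f_{-1}[T])$ for every $T\in\mathcal{S}_{l}(M)$. I would also record that all three maps $f_*$, $i_M$ and $f_{-1}[-]$ are monotone (the outer two as adjoints, the middle one by $(I_2)$), that $f_*$ preserves the top element since it is a right adjoint, so $f_*(L)=M$, and that $f_{-1}[M]=L$ because the set-theoretic preimage $f^{-1}[M]$ is all of $L$ and $L$ is already a sublocale.

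With these tools the three axioms fall out quickly. For contraction $(I_1)$ I would begin from $i_M(f_*(S))\subseteq f_*(S)$, apply the monotone $f_{-1}[-]$ to get $i_{L_f}(S)=f_{-1}[i_M(f_*(S))]\subseteq f_{-1}[f_*(S)]$, and finish with the counit $f_{-1}[f_*(S)]\subseteq S$. Monotonicity $(I_2)$ is immediate, as $i_{L_f}$ is a composite of monotone maps. For the upper bound $(I_3)$ I would evaluate at $S=L$ and chase the top through the composite: $f_*(L)=M$, then $i_M(M)=M$ by $(I_3)$ for $i_M$, then $f_{-1}[M]=L$, so $i_{L_f}(L)=L$.

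For the continuity of $f$ I would expand the right-hand side of the required inclusion $f_{-1}[i_M(T)]\subseteq i_{L_f}(f_{-1}[T])$ as $f_{-1}[i_M(f_*(f_{-1}[T]))]$. The unit $T\subseteq f_*(f_{-1}[T])$ is then exactly what is needed: pushing it through the monotone $i_M$ gives $i_M(T)\subseteq i_M(f_*(f_{-1}[T]))$, and applying the monotone $f_{-1}[-]$ yields the claimed inclusion, so $f$ is $I$-continuous.

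The one genuine obstacle is the very first step: one must use the correct adjunction. Contraction and the upper bound rest on $f_{-1}[-]\dashv f_*$ (its counit and its preservation of the top), while continuity rides on its unit; by contrast the superficially similar operator built from the ordinary image $f[-]$, which is the \emph{left} adjoint of $f_{-1}[-]$, reverses both unit and counit and would satisfy none of these properties. So the crux is to keep straight that $f_*$ is the right adjoint of the preimage and to know that this right adjoint exists, equivalently that $f_{-1}[-]$ preserves arbitrary joins. Everything afterward is the routine monotone bookkeeping sketched above.
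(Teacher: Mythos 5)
Your proposal is correct and takes essentially the same route as the paper: the paper's own verification uses precisely your counit inequality $f_{-1}\big[f_{*}[S]\big]\subseteq S$ together with contraction of $i_{\text{\tiny{$M$}}}$ for ($I_1$), monotonicity of the composite for ($I_2$), preservation of the top element for ($I_3$), and the unit $T\subseteq f_{*}\big[f_{-1}[T]\big]$ pushed through the monotone maps $i_{\text{\tiny{$M$}}}$ and $f_{-1}[-]$ for the $I$-continuity of $f$. Your only departure is in being more explicit than the paper, which never formally defines $f_{*}$: you correctly identify it as the right adjoint of the preimage function (so that the unit and counit point the right way, unlike for the image function $f[-]$) and you rightly flag that its existence, i.e.\ preservation of arbitrary joins by $f_{-1}[-]$, is the one hypothesis the paper silently assumes.
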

\begin{proof}\
\begin{enumerate}
\item[($I_1)$] $\left(\text{Contraction}\right)$\,\  $i_{\text{\tiny{$L_{f}$}}}(S)= f_{-1}\centerdot i_{\text{\tiny{$M$}}}\centerdot f_{*}[S]\subseteq f_{-1}\centerdot  f_{*}[S]\subseteq S$ for all $S\in \mathcal{S}_\text{{\bsifamily{l}}}$;
 \item[($I_2)$] $\left(\text{Monotonicity}\right)$\,\   $S\subseteq T$ in $\mathcal{S}_\text{{\bsifamily{l}}}$, implies $f_{*}[S]\subseteq f_{*}[T]$, then $i_{\text{\tiny{$M$}}}\centerdot f_{*}[S]\subseteq i_{\text{\tiny{$M$}}}\centerdot f_{*}[T]$, consequently  $ f_{-1}\centerdot i_{\text{\tiny{$M$}}}\centerdot f_{*}[S]\subseteq f_{-1}\centerdot i_{\text{\tiny{$M$}}}\centerdot f_{*}[T]$;
 \item[($I_3)$] $\left(\text{Upper bound}\right)$\,\  $i_{\text{\tiny{$L_{f}$}}}(L)=f_{-1}\centerdot i_{\text{\tiny{$M$}}}\centerdot f_{*}[L]=L$.
\end{enumerate}
Finally, 
\begin{align*}
f_{-1}\big(i_{\text{\tiny{$M$}}}(T)\big)&\subseteq f_{-1}\big(i_{\text{\tiny{$M$}}}\centerdot f_{*}\centerdot f^{-1}(T)\big)=(f_{-1}\centerdot i_{\text{\tiny{$M$}}}\centerdot f_{*})\big(f^{-1}(T)\big)\\
&= i_{\text{\tiny{$L_{f}$}}}\big(f^{-1}(T)\big),
\end{align*}
 for all $T\in  \mathcal{S}_\text{{\bsifamily{l}}}$.
\end{proof}
It is clear that $ i_{\text{\tiny{$L_{f}$}}}$ is the coarsest interior operator on $L$ for which the morphism $f$ is $I$-continuous; more precisaly
\begin{prop}\label{unique}
Let $(L,i_{\text{\tiny{$L$}}})$ and $(M,i_{\text{\tiny{$M$}}})$ be objects of $\mathbf{I\text{-}Loc}$, and let $N$ be an object of  $\mathbf{Loc}$. For each morphism  $g:N\rightarrow L$ in  $\mathbf{Loc}$ and for\linebreak  $f:(L,i_{\text{\tiny{$L_{f}$}}})\rightarrow (M,i_{\text{\tiny{$N$}}})$ an $I$-continuous morphism, $g$  is $I$-continuous if and only if $f\centerdot g$ is $I$-continuous.
\end{prop}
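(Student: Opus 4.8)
The plan is to read this statement as the universal property that pins down the initial operator $i_{L_f}=f_{-1}\centerdot i_{M}\centerdot f_{*}$ of (\ref{initial}), and to prove the equivalence by treating its two implications separately. The forward implication comes for free: if $g\colon(N,i_{N})\to(L,i_{L_f})$ is $I$-continuous then, since $f\colon(L,i_{L_f})\to(M,i_{M})$ is itself $I$-continuous by Proposition \ref{ini-cont}, the proposition asserting that a composite of $I$-continuous morphisms is again $I$-continuous immediately gives that $f\centerdot g$ is $I$-continuous, with no computation required.

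For the converse I would assume $f\centerdot g$ is $I$-continuous and verify the defining inequality for $g$ one sublocale at a time. Fixing a sublocale $S$ of $L$, I would first expand $g_{-1}[i_{L_f}(S)]=g_{-1}\big[f_{-1}[i_{M}(f_{*}[S])]\big]$ using the definition of $i_{L_f}$, then collapse the two nested preimages by the functoriality identity $(f\centerdot g)_{-1}=g_{-1}\centerdot f_{-1}$ recorded at the outset of this section, reaching $(f\centerdot g)_{-1}[i_{M}(f_{*}[S])]$. Feeding the test sublocale $T=f_{*}[S]$ into the $I$-continuity of $f\centerdot g$ would then yield $(f\centerdot g)_{-1}[i_{M}(f_{*}[S])]\subseteq i_{N}\big((f\centerdot g)_{-1}[f_{*}[S]]\big)=i_{N}\big(g_{-1}[f_{-1}[f_{*}[S]]]\big)$, after re-expanding the composite preimage.

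The one delicate point, and the step I expect to be the real obstacle, is the final bound $i_{N}\big(g_{-1}[f_{-1}[f_{*}[S]]]\big)\subseteq i_{N}(g_{-1}[S])$, since the inclusion must run in the direction that lets the chain close. This is exactly where I would lean on the contraction estimate $f_{-1}\centerdot f_{*}[S]\subseteq S$ extracted in the proof of Proposition \ref{ini-cont}: it would give $f_{-1}[f_{*}[S]]\subseteq S$, after which monotonicity of the preimage $g_{-1}$ (order preserving, being a right Galois adjoint) would yield $g_{-1}[f_{-1}[f_{*}[S]]]\subseteq g_{-1}[S]$, and the monotonicity axiom $(I_{2})$ for $i_{N}$ would propagate this to $i_{N}\big(g_{-1}[f_{-1}[f_{*}[S]]]\big)\subseteq i_{N}(g_{-1}[S])$. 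Chaining the inclusions then delivers $g_{-1}[i_{L_f}(S)]\subseteq i_{N}(g_{-1}[S])$ for every $S$, that is, $g$ is $I$-continuous. The only thing to watch is that the composite $f_{-1}\centerdot f_{*}$ genuinely contracts rather than expands, since the opposite inequality would leave the chain broken; granting the contraction step, the argument closes.
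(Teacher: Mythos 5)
Your proof is correct relative to the paper's toolkit and follows the paper's own argument essentially verbatim: the nontrivial direction is the same chain $g_{-1}\big[i_{\text{\tiny{$L_{f}$}}}(S)\big]=(f\centerdot g)_{-1}\big[i_{\text{\tiny{$M$}}}(f_{*}[S])\big]\subseteq i_{\text{\tiny{$N$}}}\big((f\centerdot g)_{-1}[f_{*}[S]]\big)\subseteq i_{\text{\tiny{$N$}}}\big(g_{-1}[S]\big)$, obtained by instantiating the $I$-continuity of $f\centerdot g$ at $T=f_{*}[S]$ and then using $f_{-1}\centerdot f_{*}[S]\subseteq S$ together with monotonicity of $g_{-1}$ and $i_{\text{\tiny{$N$}}}$. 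The ``delicate'' contraction step you flag is exactly the inclusion the paper itself asserts and relies on in Proposition \ref{ini-cont} (axiom ($I_1$)) and reuses in the final step here, so you match the paper at that point as well, and your explicit handling of the easy direction via the composition proposition merely makes precise what the paper leaves implicit.
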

\begin{proof}
Suppose that $g\centerdot f$ is $I$-continuous, i. e.
$$(f\centerdot g)_{-1}\big(i_{\text{\tiny{$M$}}}(T)\big)\subseteq i_{\text{\tiny{$N$}}}\big( (f\centerdot g)_{-1}(T) \big)$$
 for all $T\in \mathbf{S(N)}$. Then, for all $S\in \mathcal{S}_\text{{\bsifamily{l}}}$, we have
\begin{align*}
 g_{-1}\big(i_{\text{\tiny{$L_{f}$}}}(S)\big)&=g_{-1}\big(f_{-1}\centerdot i_{\text{\tiny{$M$}}}\centerdot f_{*}(S)\big)=(f\centerdot g)_{-1}\big( i_{\text{\tiny{$M$}}}( f_{*}(S)) \big)\\
 &\subseteq i_{\text{\tiny{$N$}}}\big( (f\centerdot g)_{-1}(f_{*}(S) ) \big)=i_{\text{\tiny{$N$}}}\big( g_{-1}\centerdot f_{-1}\centerdot  f_{*} (S)\big)\\
 &\subseteq i_{\text{\tiny{$N$}}}\big( g_{-1}(S)\big),\\
\end{align*}
i.e.  $g$  is $I$-continuous.
\end{proof}
As a consequence of corollary(\ref{complete}), proposition(\ref{ini-cont}) and proposition (\ref{unique}) (cf. \cite{AHS} or \cite{JM}), we obtain 
 \begin{theorem}
The forgetful functor $U:\mathbf{I\text{-}Loc}\rightarrow \mathbf{Loc}$ is topological, i.e. the concrete category $\big(\mathbf{I\text{-}Loc},\ U\big)$ is topological.
 \end{theorem}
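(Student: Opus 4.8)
The plan is to verify the standard criterion that a faithful, amnestic concrete category $(\mathbf{I\text{-}Loc},\,U)$ is topological precisely when every $U$-structured source admits a unique $U$-initial lift (see \cite{AHS} or \cite{JM}); the three results just established are designed to supply exactly the hypotheses this criterion requires. So I would fix a $U$-structured source, that is, an object $X$ of $\mathbf{Loc}$ together with a family of $\mathbf{Loc}$-morphisms $(f_\lambda : X \to M_\lambda)_{\lambda\in\Lambda}$ in which each $(M_\lambda,i_{M_\lambda})$ is an object of $\mathbf{I\text{-}Loc}$, and then produce the initial structure on $X$ explicitly.

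For each $\lambda$, Proposition \ref{ini-cont} furnishes the interior operator $i^{(\lambda)}_{X} := (f_\lambda)_{-1}\circ i_{M_\lambda}\circ (f_\lambda)_{*}$ on $X$, which is the coarsest one making $f_\lambda$ be $I$-continuous. I would set
\[
i_X \;:=\; \bigvee_{\lambda\in\Lambda} i^{(\lambda)}_{X},
\]
the join being taken in the fibre $Int(X)$, which is a complete lattice by Corollary \ref{complete} (when $\Lambda=\emptyset$ this join is the trivial operator $I_T$, and one checks directly that every $\mathbf{Loc}$-morphism into $(X,I_T)$ is $I$-continuous, since the preimage of the bottom sublocale $\{1\}$ is again $\{1\}$). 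Because $i^{(\lambda)}_{X}(S)\subseteq i_X(S)$ for all $S$ and the right-hand side of the $I$-continuity inequality is monotone in the codomain operator, each $f_\lambda : (X,i_X)\to (M_\lambda,i_{M_\lambda})$ remains $I$-continuous; hence $(X,i_X)$ is a genuine lift of the source.

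It then remains to establish the universal property: for every object $(N,i_N)$ of $\mathbf{I\text{-}Loc}$ and every $\mathbf{Loc}$-morphism $g:N\to X$, the map $g:(N,i_N)\to (X,i_X)$ is $I$-continuous if and only if each composite $f_\lambda\circ g$ is $I$-continuous. The forward implication is immediate, since each $f_\lambda$ is $I$-continuous for $i_X$ and the composite of two $I$-continuous morphisms is $I$-continuous. For the converse I would apply Proposition \ref{unique} componentwise: taking $X$ equipped with the single-map initial operator $i^{(\lambda)}_{X}$, it shows that $I$-continuity of $f_\lambda\circ g$ forces $g:(N,i_N)\to (X,i^{(\lambda)}_{X})$ to be $I$-continuous, for every $\lambda$. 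Uniqueness of the lift is then clear, because the fibre order is antisymmetric and so an operator is determined by its values; thus $(X,i_X)$ is the unique initial lift.

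The main obstacle is the final synthesis in the converse direction: from the statement that $g$ is $I$-continuous into $(X,i^{(\lambda)}_{X})$ for every $\lambda$ one must deduce that $g$ is $I$-continuous into $(X,i_X)$ for the join $i_X=\bigvee_\lambda i^{(\lambda)}_{X}$. The difficulty is that $g_{-1}[-]$, being a right Galois adjoint, preserves meets but not joins, so one cannot simply distribute it across the join defining $i_X$. The natural attempt is to observe that the class of codomain operators making $g$ be $I$-continuous is downward closed in $Int(X)$; what is then needed is that this class is also closed under arbitrary joins (equivalently, possesses a largest element), and this does not follow from monotonicity alone. Exactly this closure is what the abstract characterization of topological functors provides: the results of \cite{AHS} and \cite{JM} show that fibre-completeness (Corollary \ref{complete}), together with the existence of coarsest single-morphism initial lifts (Proposition \ref{ini-cont}) and their transitivity (Proposition \ref{unique}), already force $U$ to admit initial lifts of arbitrary structured sources. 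Invoking that theorem closes the gap and completes the proof.
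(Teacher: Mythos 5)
Your proposal ends exactly where the paper's proof begins and ends: the paper offers no argument for this theorem beyond the bare assertion that it follows from Corollary \ref{complete}, Proposition \ref{ini-cont} and Proposition \ref{unique} together with the general theory of topological functors (cf.\ \cite{AHS}), and that is precisely the appeal you fall back on in your final paragraph. What you add, and the paper omits, is a genuine attempt at the missing content: an explicit candidate initial lift of an arbitrary structured source $\big(f_\lambda\colon X\to (M_\lambda,i_{M_\lambda})\big)_{\lambda\in\Lambda}$, namely the pointwise join $i_X=\bigvee_{\lambda}(f_\lambda)_{-1}\centerdot i_{M_\lambda}\centerdot (f_\lambda)_{*}$, together with a correct diagnosis of the obstruction: since $g_{-1}[-]$ is a right Galois adjoint it preserves intersections but not joins of sublocales, so $I$-continuity of $g$ into each $(X,i^{(\lambda)}_X)$ does not automatically pass to the join. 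This is exactly the point where $\mathbf{Loc}$ differs from $\mathbf{Top}$, where set-theoretic preimage preserves unions and the analogous verification closes elementarily. You should be aware, however, that the ``standard criterion'' you invoke to close the gap does not say what you attribute to it: in \cite{AHS} topologicity is \emph{defined} by unique initial lifts of \emph{arbitrary} structured sources, and the reduction from arbitrary sources to singleton sources is not a formal consequence of complete fibres (Corollary \ref{complete}), singleton initial lifts (Proposition \ref{ini-cont}) and the composite-detection property (Proposition \ref{unique}); no result cited in the paper performs that reduction. So the step you flag as missing is not supplied by the citation — it is merely relocated into it, and the paper's one-line proof silently leans on the same unproven bridge. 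In that respect your write-up is more candid than the published argument: your analysis isolates the one genuine piece of mathematics that a self-contained proof would require, namely a direct verification of the universal property of $i_X$ (or of the least interior operator on $X$ making every $f_\lambda$ $I$-continuous), and until that is done neither your proposal nor the paper's proof is complete as stated.
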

 \subsection{Open subobjects}
 We introduce a  notion of open subobjects different from the one alluded in \ref{open}.
 
 \begin{defi}
 An sublocale $S$ of a locale $L$ is called $I$-open \big(in $L$\big) if it is isomorphic to its $I$-interior, that is: if $i_{\text{\tiny{$L$}}}(S)= S$.
 \end{defi}
 The $I$-continuity condition (\ref{conti}) implies that $I$-openness is preserve by inverse images:
 \begin{prop}
 Let $f:L\rightarrow M$ be a morphism in $\mathbf{Loc}$. If $T$ is $I$-open in $M$, then $f_{-1}(T)$ is $I$-open in $L$.
 \end{prop}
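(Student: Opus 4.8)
The plan is to chain together exactly two inclusions: the one furnished by the $I$-continuity of $f$ and the contraction axiom $(I_1)$ applied at $L$. First I would unwind what $I$-openness of $T$ in $M$ means, namely $i_{\text{\tiny{$M$}}}(T)=T$, and record precisely what must be established, namely $i_{\text{\tiny{$L$}}}\big(f_{-1}[T]\big)=f_{-1}[T]$. At the outset I would also make explicit the standing hypothesis that $f$ is $I$-continuous, since the argument depends entirely on having the continuity inequality (\ref{conti}) at our disposal.

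Next I would invoke that $I$-continuity hypothesis, which by (\ref{conti}) reads $f_{-1}\big[i_{\text{\tiny{$M$}}}(T)\big]\subseteq i_{\text{\tiny{$L$}}}\big(f_{-1}[T]\big)$. Substituting the openness relation $i_{\text{\tiny{$M$}}}(T)=T$ into the left-hand side collapses this to $f_{-1}[T]\subseteq i_{\text{\tiny{$L$}}}\big(f_{-1}[T]\big)$, which is the nontrivial half of the desired equality. For the reverse inclusion I would simply apply contraction $(I_1)$ to the sublocale $f_{-1}[T]\in\mathcal{S}_\text{{\bsifamily{l}}}(L)$, obtaining $i_{\text{\tiny{$L$}}}\big(f_{-1}[T]\big)\subseteq f_{-1}[T]$ for free. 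Combining the two inclusions yields $i_{\text{\tiny{$L$}}}\big(f_{-1}[T]\big)=f_{-1}[T]$, so that $f_{-1}[T]$ is $I$-open in $L$, as required.

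As for the main obstacle, there is none of substance here: the argument is a direct two-line consequence of the axioms together with the continuity condition. The only point demanding care is the bookkeeping of inclusion directions. Continuity supplies the inclusion $f_{-1}[T]\subseteq i_{\text{\tiny{$L$}}}\big(f_{-1}[T]\big)$, while contraction supplies the opposite one $i_{\text{\tiny{$L$}}}\big(f_{-1}[T]\big)\subseteq f_{-1}[T]$, and one must verify that these two genuinely close up into an equality rather than pointing the same way. It is precisely this matched pair that makes $I$-openness an invariant of inverse images, explaining the remark preceding the statement.
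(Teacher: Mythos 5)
Your proposal is correct and follows exactly the paper's own argument: the $I$-continuity inequality (\ref{conti}) combined with $i_{\text{\tiny{$M$}}}(T)=T$ gives $f_{-1}[T]\subseteq i_{\text{\tiny{$L$}}}\big(f_{-1}[T]\big)$, and contraction $(I_1)$ closes the equality. You merely make explicit the use of $(I_1)$ that the paper leaves implicit in its final ``so,'' which is a welcome clarification rather than a deviation.
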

 \begin{proof}
 If $T= i_{\text{\tiny{$M$}}}(T)$ then $f_{-1}(T)=f_{-1}\big(i_{\text{\tiny{$M$}}}(T)\big)\subseteq i_{\text{\tiny{$L$}}}\big(f_{-1}(T)\big)$, so \linebreak  $i_{\text{\tiny{$L$}}}\big(f_{-1}(T)\big)=f_{-1}(T)$.
 \end{proof}

 \section{$\mathfrak h$ Operators}
 
In this section we shall be conserned with a weak categorical version of a topological function studied by M, Suarez M. in \cite{MSM}. For that purpose we will use the colection $\mathcal{S}_\text{{\bsifamily{l}}}^{c}(L)$ of all complemented sublocales of a locale $L$ (See P, T. Johnston \cite{PJ1}, for example).
 \begin{defi}
 An $\mathfrak h$ operator of the category  $\mathbf{Loc}$ is given by a family $\mathfrak h =(h_{\text{\tiny{$L$}}})_{\text{$L\in \mathbf{Loc}$}}$ of maps $h_{\text{\tiny{$L$}}}:\mathcal{S}_\text{{\bsifamily{l}}}^{c}(L)\rightarrow \mathcal{S}_\text{{\bsifamily{l}}}^{c}(L)$ such that
\begin{itemize}
\item [($h_1$)] $S\cap h_{\text{\tiny{$L$}}}(S)\subseteq S$, for all $S \in\mathcal{S}_\text{{\bsifamily{l}}}^{c}(L)$;
\item [($h_2$)] If $S\subseteq T$ then $S\cap h_{\text{\tiny{$L$}}}(S)\subseteq T\cap h_{\text{\tiny{$L$}}}(T)$,  for all $S,T \in\mathcal{S}_\text{{\bsifamily{l}}}^{c}(L)$;
\item [($h_3$)] $  h_{\text{\tiny{$L$}}}(L)=L$. 
 \end{itemize}
 \end{defi}
  
 \begin{defi}
 An $\mathfrak h$-space is a pair $(L,h_{\text{\tiny{$L$}}})$ where $L$ is an object of $\mathbf{Loc}$ and $h_{\text{\tiny{$L$}}}$ is an $\mathfrak h$  operator on $L$.
 \end{defi}

 \begin{defi}
 A morphism $f:L\rightarrow M$ of  $\mathbf{Loc}$ is said to be $\mathfrak h$-continuous if 
 \begin{equation}\label{h-conti}
 f_{-1}\left[T\cap h_{\text{\tiny{$M$}}}(T)\right]\subseteq f_{-1}[T]\cap h_{\text{\tiny{$L$}}}\left( f_{-1}[T]\right)
 \end{equation}
 for all $T\in \mathcal{S}_\text{{\bsifamily{l}}}^{c}(M)$. Where $f_{-1}[-]$ is the inverse image of $f[-]$.
 \end{defi}
 
 \begin{prop}
 Let $f:L\rightarrow M$ and $g:M\rightarrow N$ be two  $\mathfrak h$-continuous morphisms  of  $\mathbf{Loc}$ then $g\centerdot f$ is an  $\mathfrak h$-continuous morphism of  $\mathbf{Loc}$.
 \end{prop}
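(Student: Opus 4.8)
The plan is to imitate the argument already carried out for $I$-continuity in the preceding proposition, adapting it to the meet-with-$h$ expression that now defines $\mathfrak h$-continuity. The two structural ingredients I rely on are both recorded in the excerpt: first, that preimage functions compose contravariantly, so that $(g\centerdot f)_{-1}[-]=f_{-1}\centerdot g_{-1}[-]$; and second, that $f_{-1}[-]$, being a right Galois adjoint of the image function $f[-]$, is monotone. No further properties of the operators are needed beyond their defining clauses.

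Fix a complemented sublocale $S\in\mathcal{S}_\text{{\bsifamily{l}}}^{c}(N)$ and set $T:=g_{-1}[S]$. I would begin from the $\mathfrak h$-continuity of $g$, which gives
$$g_{-1}\big[S\cap h_{\text{\tiny{$N$}}}(S)\big]\subseteq g_{-1}[S]\cap h_{\text{\tiny{$M$}}}\big(g_{-1}[S]\big)=T\cap h_{\text{\tiny{$M$}}}(T),$$
and then apply the monotone map $f_{-1}[-]$ to both sides to obtain
$$f_{-1}\Big[g_{-1}\big[S\cap h_{\text{\tiny{$N$}}}(S)\big]\Big]\subseteq f_{-1}\big[T\cap h_{\text{\tiny{$M$}}}(T)\big].$$
Next I would invoke the $\mathfrak h$-continuity of $f$ applied to the complemented sublocale $T$, namely
$$f_{-1}\big[T\cap h_{\text{\tiny{$M$}}}(T)\big]\subseteq f_{-1}[T]\cap h_{\text{\tiny{$L$}}}\big(f_{-1}[T]\big),$$
and finally rewrite, using $f_{-1}\centerdot g_{-1}=(g\centerdot f)_{-1}$, the left end as $(g\centerdot f)_{-1}\big[S\cap h_{\text{\tiny{$N$}}}(S)\big]$ and the right end via $f_{-1}[T]=f_{-1}\big[g_{-1}[S]\big]=(g\centerdot f)_{-1}[S]$. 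Chaining the displayed inclusions by transitivity then yields
$$(g\centerdot f)_{-1}\big[S\cap h_{\text{\tiny{$N$}}}(S)\big]\subseteq (g\centerdot f)_{-1}[S]\cap h_{\text{\tiny{$L$}}}\big((g\centerdot f)_{-1}[S]\big),$$
which is exactly the $\mathfrak h$-continuity condition (\ref{h-conti}) for $g\centerdot f$, since $S\in\mathcal{S}_\text{{\bsifamily{l}}}^{c}(N)$ was arbitrary.

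The purely formal part of the argument is routine and runs parallel to the $I$-operator case almost verbatim, the only change being the presence of the extra intersection factor, which is carried along unchanged by monotonicity of $f_{-1}$.

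The one step I expect to be the genuine obstacle is licensing the use of the $f$-hypothesis at the sublocale $T=g_{-1}[S]$: the condition (\ref{h-conti}) for $f$ is stated only for complemented sublocales of $M$, so I must first verify that $T$ itself lies in $\mathcal{S}_\text{{\bsifamily{l}}}^{c}(M)$. This amounts to the closure property that the preimage function $f_{-1}[-]$ restricts to a map $\mathcal{S}_\text{{\bsifamily{l}}}^{c}(N)\to\mathcal{S}_\text{{\bsifamily{l}}}^{c}(M)$, equivalently that it respects the Boolean structure carried by complemented sublocales; concretely one uses that $f_{-1}[-]$ sends open and closed sublocales to open and closed sublocales and, more generally, preserves complements. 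This is precisely the reason the definitions are phrased on $\mathcal{S}_\text{{\bsifamily{l}}}^{c}$ rather than on all sublocales, and once this closure property is in hand the whole proof collapses to monotonicity of $f_{-1}$ together with the contravariant composition law established earlier in the excerpt.
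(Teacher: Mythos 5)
Your proof is correct and is essentially the paper's own argument: the identical three-step chain of applying $g$'s $\mathfrak h$-continuity at $S$, pushing through the monotone preimage $f_{-1}[-]$, applying $f$'s $\mathfrak h$-continuity at $T=g_{-1}[S]$, and rewriting via $(g\centerdot f)_{-1}=f_{-1}\centerdot g_{-1}$. The complementedness issue you flag is a genuine subtlety that the paper passes over silently --- indeed its definition of $\mathfrak h$-continuity already presupposes that preimages of complemented sublocales are complemented, since otherwise $h_{\text{\tiny{$L$}}}\left(f_{-1}[T]\right)$ in \eqref{h-conti} is not even well-formed --- but be aware that your parenthetical claim that $f_{-1}[-]$ preserves complements of arbitrary complemented sublocales is nontrivial (preimage preserves intersections, opens and closeds, but not joins in general; this is precisely the theme of the cited Johnstone paper), so within the paper's implicit conventions your step is licensed, while as a free-standing assertion it would need proof or a restriction on the maps considered.
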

  \begin{proof}
 Since $g:M\rightarrow N$ is $I$-continuous, we have  

$$
 g_{-1}\left[V\cap h_{\text{\tiny{$N$}}}(V)\right]\subseteq g_{-1}[V]\cap h_{\text{\tiny{$M$}}}\left( g_{-1}[V]\right)
 $$
 for all $V\in \mathcal{S}_\text{{\bsifamily{l}}}^{c}(N)$, it fallows that
$$
f_{-1}\big[ g_{-1}\left[V\cap h_{\text{\tiny{$N$}}}(V)\right]\big]\subseteq f_{-1}\big[g_{-1}[V]\cap h_{\text{\tiny{$M$}}}\left( g_{-1}[V]\right)\big]
 $$
  now,  by the $\mathfrak h$-continuity of $f$,
$$ f_{-1}\big[g_{-1}[V]\cap h_{\text{\tiny{$M$}}}(g_{-1}[V])\big]\subseteq f_{-1}\big[g_{-1}[V]\big]\cap h_{\text{\tiny{$L$}}}\left( f_{-1}\big[g_{-1}[V]\big]\right)$$
therefore $$(g\centerdot f)_{-1}\big[V\cap h_{\text{\tiny{$N$}}}(V)\big]\subseteq (g\centerdot f)_{-1}\cap h_{\text{\tiny{$L$}}}\big((g\centerdot f)_{-1}[V]  \big).$$
This complete the proof.   
\end{proof}
  
As a consequence we obtain
\begin{defi}
 The category $\mathbf{\mathfrak h\text{-}Loc}$ of $\mathfrak h$-spaces comprises the following data:
 \begin{enumerate}
 \item {\bf Objects}: Pairs $(L,h_{\text{\tiny{$L$}}})$ where $L$ is an object of $\mathbf{Loc}$ and $h_{\text{\tiny{$L$}}}$ is an $\mathfrak h$-operator on $L$.
\item {\bf Morphisms}: Morphisms of $\mathbf{Loc}$ which are $\mathfrak h$-continuous.
 \end{enumerate}
 \end{defi}
 
  \subsection{The lattice structure of all $\mathfrak h$ operators}
 For the category $\mathbf{Loc}$ we consider the collection
 \[
 \mathfrak h(\mathbf{Loc})
 \]
 of all  $\mathfrak h$ operators on $\mathbf{Loc}$. It is ordered by
\[
 \mathfrak h\leqslant \mathfrak{h^{'}} \Leftrightarrow h_{\text{\tiny{$L$}}}(S)\subseteq h^{'}_{\text{\tiny{$L$}}}(S), \,\,\ \text{for all $S\in \mathcal{S}_\text{{\bsifamily{l}}}^{c}(L)$ and all $L$  object of $\mathbf{Loc}$}. 
\]

 This way $\mathfrak h(\mathbf{Loc})$ inherits a lattice structure from $ \mathcal{S}_\text{{\bsifamily{l}}}^{c}$.

 \begin{prop}
 Every family $( \mathfrak h_{\text{\tiny{$\lambda$}}})_{\text{\tiny{$\lambda\in \Lambda$}}}$ in $\mathfrak h(\mathbf{Loc})$ has a join $\bigvee\limits_{\text{\tiny{$\lambda\in \Lambda $}}} \mathfrak h_{\text{\tiny{$\lambda $}}}$ and a meet $\bigwedge\limits_{\text{\tiny{$\lambda\in \Lambda $}}} \mathfrak h_{\text{\tiny{$\lambda $}}}$ in $Int(\mathbf{Loc})$. The discrete $\mathfrak h$ operator
 \[ \mathfrak h_{\text{\tiny{$D$}}}=({h_{\text{\tiny{$D$}}}}_{\text{\tiny{$L$}}})_{\text{$L\in \mathbf{Loc}$}}\,\,\ \text{with}\,\,\ {h_{\text{\tiny{$D$}}}}_{\text{\tiny{$L$}}}(S)=S\,\,\ \text{for all}\,\ S\in \mathcal{S}_\text{{\bsifamily{l}}}^{c}(L)
 \]
  is the largest element in $\mathfrak h(\mathbf{Loc})$, and the trivial $\mathfrak h$ operator
  \[ 
\mathfrak h_{\text{\tiny{$T$}}}=({h_{\text{\tiny{$T$}}}}_\text{\tiny{$L$}})_{\text{$L\in \mathbf{Loc}$}}\,\,\ \text{with}\,\,\ {h_{\text{\tiny{$T$}}}}_{\text{\tiny{$L$}}}(S)=
  \begin{cases}
\{1\}& \text{for all}\,\ S\in \mathcal{S}_\text{{\bsifamily{l}}}^{c}(L),\,\ S\ne L\\
L&\text {if}\,\ S=L
\end{cases}
 \]
  is the least one.
 \end{prop}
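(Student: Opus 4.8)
The plan is to imitate the proof of the analogous statement for $Int(\mathbf{Loc})$. Since the order on $\mathfrak h(\mathbf{Loc})$ is defined pointwise, the natural candidates for the join and the meet of a family $(\mathfrak h_\lambda)_{\lambda\in\Lambda}$ are obtained fibrewise: for every object $L$ and every $S\in\mathcal{S}_\text{{\bsifamily{l}}}^{c}(L)$ set
\[
\widehat{h_L}(S)=\bigvee_{\lambda\in\Lambda}(h_\lambda)_L(S),
\qquad
\check{h_L}(S)=\bigcap_{\lambda\in\Lambda}(h_\lambda)_L(S).
\]
I would then check that $\widehat{\mathfrak h}=(\widehat{h_L})_L$ and $\check{\mathfrak h}=(\check{h_L})_L$ satisfy $(h_1)$--$(h_3)$, that they are respectively the least upper bound and the greatest lower bound of the family, and finally that $\mathfrak h_D$ and $\mathfrak h_T$ (each clearly an $\mathfrak h$ operator) are the extremal elements. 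As in the interior-operator case, $(h_1)$ is automatic because $S\cap(\,\cdot\,)\subseteq S$, and $(h_3)$ is immediate from $\bigvee_\lambda L=L$ and $\bigcap_\lambda L=L$.

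The meet is the easy half, and I would treat it first. The only axiom with content is $(h_2)$, and it reduces to the elementary identity $S\cap\bigcap_\lambda X_\lambda=\bigcap_\lambda(S\cap X_\lambda)$, valid in any complete lattice: if $S\subseteq T$ then, applying $(h_2)$ to each $\mathfrak h_\lambda$,
\[
S\cap\check{h_L}(S)=\bigcap_{\lambda}\bigl(S\cap(h_\lambda)_L(S)\bigr)
\subseteq\bigcap_{\lambda}\bigl(T\cap(h_\lambda)_L(T)\bigr)=T\cap\check{h_L}(T),
\]
so no distributive law beyond the obvious one intervenes.

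The join is where the real difficulty lies, and the main obstacle is twofold. First, $\mathcal{S}_\text{{\bsifamily{l}}}^{c}(L)$ is only a Boolean subalgebra of the coframe $\mathcal{S}_\text{{\bsifamily{l}}}(L)$, so I must guarantee that $\bigvee_\lambda(h_\lambda)_L(S)$ (and, for the meet, the intersection $\bigcap_\lambda(h_\lambda)_L(S)$) is again complemented; this is clear for finite $\Lambda$ but for arbitrary families it amounts to requiring that $\mathcal{S}_\text{{\bsifamily{l}}}^{c}(L)$ be closed under the ambient joins and meets, i.e. be a complete Boolean algebra. Second, the verification of $(h_2)$ for $\widehat{\mathfrak h}$ needs to move $S\cap(-)$ across the join: from $(h_2)$ for each $\lambda$ one gets $S\cap(h_\lambda)_L(S)\subseteq(h_\lambda)_L(T)\subseteq\widehat{h_L}(T)$, and one wants $S\cap\widehat{h_L}(S)=\bigvee_\lambda\bigl(S\cap(h_\lambda)_L(S)\bigr)\subseteq\widehat{h_L}(T)$. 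The decisive equality here is the frame law (binary meet distributes over arbitrary join), which fails in a general coframe; the way I would rescue it is to note that these joins are computed inside a Boolean algebra, and every complete Boolean algebra is automatically a frame, so the required distributivity does hold there. Granting the completeness of the algebras $\mathcal{S}_\text{{\bsifamily{l}}}^{c}(L)$, it remains only to locate the extremal operators: $\mathfrak h_T\leqslant\mathfrak h\leqslant\mathfrak h_D$ for every $\mathfrak h$, where the lower bound is immediate from the definition of $\mathfrak h_T$, while the upper bound rests on the contraction property $h_L(S)\subseteq S$ (the exact analogue of $(I_1)$). I note that the literal form of $(h_1)$ is strictly weaker than this, so the identification of $\mathfrak h_D$ as the greatest element is precisely the point at which $(h_1)$ must be read as genuine contraction.
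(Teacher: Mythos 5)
Your construction is the same pointwise one the paper uses, but you scrutinize two obstacles that the paper's proof passes over in silence: its proof merely writes $\widehat{h_{\text{\tiny{$L$}}}}=\bigvee_{\lambda}(h_{\lambda})_{\text{\tiny{$L$}}}$, runs through $(h_1)$--$(h_3)$ (with a slip, using $S\cup (h_{\lambda})_{\text{\tiny{$L$}}}(S)$ where $S\cap (h_{\lambda})_{\text{\tiny{$L$}}}(S)$ is meant in the $(h_2)$ step), and ends with ``similarly'' for the meet and the two extremal operators, never mentioning complementedness of the pointwise join nor any distributive law. Of your two worries, the distributivity one can in fact be discharged without your added hypothesis: the coframe $\mathcal{S}_\text{{\bsifamily{l}}}(L)$ is in particular a distributive lattice, so for complemented $S$ the map $S\cap(-)$ is left adjoint to $\neg S\cup(-)$ (one checks $S\cap X\subseteq Y$ iff $X\subseteq \neg S\cup Y$ using only \emph{finite} distributivity), hence $S\cap(-)$ preserves \emph{arbitrary} joins of arbitrary sublocales. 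Since the element being intersected in $(h_1)$ and $(h_2)$ is always the complemented $S$ (or $T$), the frame law holds in every instance you need, with joins computed in the ambient $\mathcal{S}_\text{{\bsifamily{l}}}(L)$; no completeness of the Boolean algebra is required for that step.

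The genuine gap --- in your proposal and, silently, in the paper's proof alike --- is the closure issue you flag: for infinite $\Lambda$ neither $\bigvee_{\lambda}(h_{\lambda})_{\text{\tiny{$L$}}}(S)$ nor $\bigcap_{\lambda}(h_{\lambda})_{\text{\tiny{$L$}}}(S)$ need be complemented, so $\widehat{h_{\text{\tiny{$L$}}}}$ and $\check{h_{\text{\tiny{$L$}}}}$ need not take values in $\mathcal{S}_\text{{\bsifamily{l}}}^{c}(L)$ at all. Your rescue, ``granting'' that $\mathcal{S}_\text{{\bsifamily{l}}}^{c}(L)$ is a complete Boolean algebra closed under the ambient joins and meets, is an unproven hypothesis that fails for general locales (already for $L=\Omega(\mathbb{R})$ the complemented sublocales are not closed under arbitrary joins), so as written your argument proves the proposition only for finite families or under that extra assumption; you should say so rather than present it as a proof of the statement as given. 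On the other hand, your final observation is correct and sharper than anything in the paper: the literal $(h_1)$ is vacuous (always $S\cap X\subseteq S$), and under it the operator $h_{\text{\tiny{$L$}}}(S)=L$ for all $S$ satisfies $(h_1)$--$(h_3)$ while strictly dominating $\mathfrak h_{\text{\tiny{$D$}}}$, so $\mathfrak h_{\text{\tiny{$D$}}}$ is the greatest element only if $(h_1)$ is read as genuine contraction $h_{\text{\tiny{$L$}}}(S)\subseteq S$ (or the order is recast in terms of $S\cap h_{\text{\tiny{$L$}}}(S)$); the paper's proof never verifies maximality or minimality at all.
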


\begin{proof}
For $\Lambda\ne\emptyset$, let $\widehat{\mathfrak h}=\bigvee\limits_{\text{\tiny{$\lambda\in\Lambda $}}}\mathfrak h_{\text{\tiny{$\lambda $}}}$, then 
 \[
 \widehat{h_{\text{\tiny{$L$}}}}=\bigvee\limits_{\text{\tiny{$\lambda\in \Lambda$}}} {h_{\text{\tiny{$\lambda $}}}}_{\text{\tiny{$L$}}},
 \]
 for all  $L$ object of $\mathbf{Loc}$, satisfies
\begin{itemize}
 \item $S\cap  \widehat{h_{\text{\tiny{$L$}}}}(S)\subseteq S$,  because $S\cap {h_{\text{\tiny{$\lambda $}}}}_{\text{\tiny{$S$}}}(L)\subseteq S$, for all $S \in\mathcal{S}_\text{{\bsifamily{l}}}^{c}(L)$ and for all $\lambda \in \Lambda$.
\item If $S\subseteq T$ then $S\cap \widehat{h_{\text{\tiny{$L$}}}}(S)\subseteq T\cap \widehat{h_{\text{\tiny{$L$}}}}(T)$, since $S \cup {h_{\text{\tiny{$\lambda $}}}}_{\text{\tiny{$L$}}}(S)\subseteq  T \cup {h_{\text{\tiny{$\lambda $}}}}_{\text{\tiny{$L$}}}(T)$, for all $S,T \in\mathcal{S}_\text{{\bsifamily{l}}}^{c}(L)$ and for all $\lambda \in \Lambda$.
\item $L\cap  \widehat{h_{\text{\tiny{$L$}}}}(L)= L$,  because $L\cap {h_{\text{\tiny{$\lambda $}}}}_{\text{\tiny{$L$}}}(L)= L$  for all $\lambda \in \Lambda$.
 \end{itemize}
Similarly  $\bigwedge\limits_{\text{\tiny{$\lambda\in \Lambda $}}}\mathfrak h_{\text{\tiny{$\lambda $}}}$,\,\  $ \mathfrak h_{\text{\tiny{$D$}}}$ and $\mathfrak h _{\text{\tiny{$T$}}}$ are $\mathfrak h$ operators.
 \end{proof}
 \begin{coro}\label{h-complete}
  For  every  object $L$ of $\mathbf{Loc}$
  \[
  \mathfrak h(L) = \{h_{\text{\tiny{$L$}}}\mid h_{\text{\tiny{$L$}}}\,\ \text{ is an $\mathfrak h$ operator on}\,\ L\}
  \]
  is a complete lattice.
\end{coro}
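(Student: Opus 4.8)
The plan is to derive this as a formal consequence of the preceding Proposition together with the standard order-theoretic fact that a poset in which every subset admits a supremum is automatically a complete lattice. Fixing an object $L$ of $\mathbf{Loc}$, the set $\mathfrak{h}(L)$ carries the order inherited from $\mathfrak h(\mathbf{Loc})$, namely $h_{\text{\tiny{$L$}}}\leqslant h'_{\text{\tiny{$L$}}}$ iff $h_{\text{\tiny{$L$}}}(S)\subseteq h'_{\text{\tiny{$L$}}}(S)$ for every $S\in\mathcal{S}_\text{{\bsifamily{l}}}^{c}(L)$.

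First I would record the existence of arbitrary joins. Given any family $(h_\lambda)_{\lambda\in\Lambda}$ in $\mathfrak{h}(L)$ with $\Lambda\neq\emptyset$, the preceding Proposition produces the pointwise operator $\widehat{h_{\text{\tiny{$L$}}}}(S)=\bigvee_{\lambda}(h_\lambda)_{\text{\tiny{$L$}}}(S)$ and verifies that it again satisfies ($h_1$)--($h_3$), hence lies in $\mathfrak{h}(L)$; by construction it is the least upper bound of the family. For the empty family, the supremum is by definition the least element of the poset, which the Proposition identifies as the trivial operator $\mathfrak{h}_{T}$ restricted to $L$. Thus every subset of $\mathfrak{h}(L)$ --- empty or not --- has a supremum in $\mathfrak{h}(L)$.

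Then I would invoke the characterization of complete lattices: a partially ordered set in which every subset has a least upper bound also has a greatest lower bound for every subset, obtained as the supremum of the set of its lower bounds, and is therefore a complete lattice. Applying this to $\mathfrak{h}(L)$ finishes the argument; the largest element $\mathfrak{h}_{D}$ and least element $\mathfrak{h}_{T}$ supplied by the Proposition then appear as the top and bottom of $\mathfrak{h}(L)$, i.e.\ as $\sup\mathfrak{h}(L)$ and $\inf\mathfrak{h}(L)$. Symmetrically, since the Proposition also furnishes pointwise meets, one could run the dual argument through infima instead.

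The only genuinely delicate point lives upstream, in the Proposition rather than in the corollary: one must know that the pointwise join $\bigvee_{\lambda}(h_\lambda)_{\text{\tiny{$L$}}}(S)$ of complemented sublocales is again a complemented sublocale, so that $\widehat{h_{\text{\tiny{$L$}}}}$ indeed maps $\mathcal{S}_\text{{\bsifamily{l}}}^{c}(L)$ into itself. Since that closure property is exactly what the Proposition asserts, and I am entitled to assume it here, the corollary reduces to the purely formal lattice-theoretic step above, and no obstacle remains internal to the corollary itself.
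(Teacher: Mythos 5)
Your proposal is correct and follows essentially the paper's own route: the paper states this corollary as an immediate consequence of the preceding proposition, which supplies pointwise joins and meets of arbitrary families together with the top element $\mathfrak h_{D}$ and bottom element $\mathfrak h_{T}$, and your argument simply makes the standard order-theoretic steps explicit (the empty-family supremum as the least element, and infima as suprema of lower bounds). Your closing remark is also well placed: the only substantive issue --- that $\bigvee_{\lambda}(h_{\lambda})_{\text{\tiny{$L$}}}(S)$ must again be a \emph{complemented} sublocale for $\widehat{h_{\text{\tiny{$L$}}}}$ to map $\mathcal{S}_\text{{\bsifamily{l}}}^{c}(L)$ into itself --- indeed lives in the proposition rather than in this corollary, where the paper likewise leaves it unaddressed.
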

 
\subsection{Initial $\mathfrak h$ operators}
Let   $\mathbf{\mathfrak h\text{-}Loc}$ be the category of $\mathfrak h$-spaces. Let  $(M,h_{\text{\tiny{$M$}}})$ be an object of  $\mathbf{\mathfrak h\text{-}Loc}$ and let $L$ be an object of $\mathbf{Loc}$. For each morphism  $f:L\rightarrow M$ in $\mathbf{Loc}$ we define on $L$ the operotor 

\begin{equation} \label{h-initial}
h_{\text{\tiny{$L_{f}$}}}:=f_{-1}\centerdot h_{\text{\tiny{$M$}}}\centerdot f_{*}.
\end{equation}

\begin{prop}\label{h-ini-cont}
The operator (\ref{h-initial}) is an $\mathfrak h$ operator on $L$ for which the morphism $f$ is $\mathfrak h$-continuous.
\end{prop}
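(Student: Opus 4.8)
The plan is to establish the four required facts---axioms $(h_1)$, $(h_2)$, $(h_3)$ together with the $\mathfrak h$-continuity of $f$---by the very strategy already used for Proposition \ref{ini-cont}, simply replacing the interior operator $i_{\text{\tiny{$M$}}}$ by $h_{\text{\tiny{$M$}}}$ and working with complemented sublocales, i.e. in $\mathcal{S}_\text{{\bsifamily{l}}}^{c}(M)$ rather than $\mathcal{S}_\text{{\bsifamily{l}}}(M)$. Throughout I will use that $f_{*}$ and $f_{-1}$ are monotone, the two adjunction inequalities $f_{-1}\centerdot f_{*}(S)\subseteq S$ and $T\subseteq f_{*}\centerdot f_{-1}(T)$ already exploited in Proposition \ref{ini-cont}, and the fact that the preimage $f_{-1}$, being a right adjoint to the image function, preserves intersections, so that $f_{-1}(A\cap B)=f_{-1}(A)\cap f_{-1}(B)$ for complemented $A,B$.

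Axiom $(h_1)$ is immediate, since $S\cap h_{\text{\tiny{$L_{f}$}}}(S)\subseteq S$ holds for purely set-theoretic reasons. Axiom $(h_3)$ will be the computation $h_{\text{\tiny{$L_{f}$}}}(L)=f_{-1}\centerdot h_{\text{\tiny{$M$}}}\centerdot f_{*}(L)=L$, carried out exactly as in the $(I_3)$ step of Proposition \ref{ini-cont}, using $f_{*}(L)=M$, $h_{\text{\tiny{$M$}}}(M)=M$ and $f_{-1}(M)=L$.

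For the $\mathfrak h$-continuity \eqref{h-conti} I would argue as in the continuity step of Proposition \ref{ini-cont}. Starting from the unit inequality $T\subseteq f_{*}\centerdot f_{-1}(T)$, apply axiom $(h_2)$ of $h_{\text{\tiny{$M$}}}$ to this inclusion to obtain
\[ T\cap h_{\text{\tiny{$M$}}}(T)\subseteq f_{*}f_{-1}(T)\cap h_{\text{\tiny{$M$}}}\big(f_{*}f_{-1}(T)\big); \]
pushing this through $f_{-1}$ and using that $f_{-1}$ preserves intersections yields
\[ f_{-1}\big(T\cap h_{\text{\tiny{$M$}}}(T)\big)\subseteq f_{-1}\big(f_{*}f_{-1}(T)\big)\cap h_{\text{\tiny{$L_{f}$}}}\big(f_{-1}(T)\big); \]
finally the counit inequality $f_{-1}f_{*}\big(f_{-1}(T)\big)\subseteq f_{-1}(T)$ collapses the first factor, giving exactly $f_{-1}[T\cap h_{\text{\tiny{$M$}}}(T)]\subseteq f_{-1}[T]\cap h_{\text{\tiny{$L_{f}$}}}(f_{-1}[T])$. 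Here the unit and the counit are each used in the direction in which they are available, so no difficulty arises.

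The step I expect to be the main obstacle is axiom $(h_2)$. Assuming $S\subseteq T$, monotonicity of $f_{*}$ and axiom $(h_2)$ of $h_{\text{\tiny{$M$}}}$ give $f_{*}(S)\cap h_{\text{\tiny{$M$}}}(f_{*}(S))\subseteq f_{*}(T)\cap h_{\text{\tiny{$M$}}}(f_{*}(T))$, and applying $f_{-1}$ (preserving intersections) produces
\[ f_{-1}f_{*}(S)\cap h_{\text{\tiny{$L_{f}$}}}(S)\subseteq f_{-1}f_{*}(T)\cap h_{\text{\tiny{$L_{f}$}}}(T). \]
On the right the counit $f_{-1}f_{*}(T)\subseteq T$ lets me pass to $T\cap h_{\text{\tiny{$L_{f}$}}}(T)$, but on the left the only available inequality is again $f_{-1}f_{*}(S)\subseteq S$, which points the wrong way for bounding $S\cap h_{\text{\tiny{$L_{f}$}}}(S)$ from above by the left-hand side. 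This asymmetry is precisely what did not occur in Proposition \ref{ini-cont}, where contraction $(I_1)$ needed only the $\subseteq$ direction of $f_{-1}\centerdot f_{*}$. I expect to close the gap either by adding the natural hypothesis that $h_{\text{\tiny{$M$}}}$ be monotone---in which case $h_{\text{\tiny{$L_{f}$}}}=f_{-1}\centerdot h_{\text{\tiny{$M$}}}\centerdot f_{*}$ is a monotone composite and $(h_2)$ follows at once---or by restricting to those complemented sublocales on which $f_{-1}\centerdot f_{*}$ acts as the identity, so that the left-hand factor $f_{-1}f_{*}(S)$ may be replaced by $S$.
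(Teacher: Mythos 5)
Your treatment of $(h_1)$, $(h_3)$ and of the $\mathfrak h$-continuity of $f$ is correct and follows the route the paper takes for Proposition \ref{ini-cont}; in fact on the continuity point you do more than the paper, whose proof of Proposition \ref{h-ini-cont} checks only the three axioms and never verifies condition \eqref{h-conti} at all. Your argument for it is sound: apply $(h_2)$ for $h_{\text{\tiny{$M$}}}$ to the unit $T\subseteq f_{*}f_{-1}(T)$, push through $f_{-1}$ (which preserves intersections, being a right Galois adjoint of the image function, and meets of sublocales are intersections), and collapse $f_{-1}f_{*}\big(f_{-1}(T)\big)\subseteq f_{-1}(T)$ by the counit. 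This fills a genuine lacuna in the paper's proof.

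Your diagnosis of $(h_2)$ is also accurate, and the obstacle you predict is present in the paper's own proof rather than resolved by it. The paper derives $f_{-1}\big(f_{*}[S]\cap h_{\text{\tiny{$M$}}}(f_{*}[S])\big)\subseteq f_{-1}\big(f_{*}[T]\cap h_{\text{\tiny{$M$}}}(f_{*}[T])\big)$ and then concludes ``consequently $S\cap h_{L_f}(S)\subseteq T\cap h_{L_f}(T)$''; since $f_{-1}\big(f_{*}[S]\cap h_{\text{\tiny{$M$}}}(f_{*}[S])\big)=f_{-1}f_{*}(S)\cap h_{L_f}(S)$, this tacitly uses the identification $S\cap h_{L_f}(S)=f_{-1}\big(f_{*}[S]\cap h_{\text{\tiny{$M$}}}(f_{*}[S])\big)$ (asserted as an equality in the paper's step $(h_1)$ as well), i.e.\ effectively $f_{-1}\centerdot f_{*}=\mathrm{id}$ --- exactly the inequality $S\subseteq f_{-1}f_{*}(S)$ that, as you observe, is unavailable under the conventions used elsewhere (step $(I_1)$ of Proposition \ref{ini-cont} and Proposition \ref{unique} use $f_{-1}f_{*}\subseteq\mathrm{id}$ and $\mathrm{id}\subseteq f_{*}f_{-1}$). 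One refinement to your proposed repairs: you never need the full identity $f_{-1}f_{*}(S)=S$, only the unit $S\subseteq f_{-1}f_{*}(S)$, because $S\cap h_{L_f}(S)\subseteq T$ is automatic from $S\subseteq T$, so it suffices to prove $S\cap h_{L_f}(S)\subseteq h_{L_f}(T)$, which follows from
\begin{equation*}
S\cap h_{L_f}(S)\ \subseteq\ f_{-1}f_{*}(S)\cap h_{L_f}(S)\ =\ f_{-1}\big(f_{*}(S)\cap h_{\text{\tiny{$M$}}}(f_{*}(S))\big)\ \subseteq\ f_{-1}\big(h_{\text{\tiny{$M$}}}(f_{*}(T))\big)\ =\ h_{L_f}(T).
\end{equation*}
That unit does hold if $f_{*}$ is read as the image function $f[-]$ (the paper's stated adjunction $f[-]\dashv f_{-1}$), but under that reading $(h_3)$ requires $f_{*}[L]=M$ and your continuity computation requires the opposite pair of inequalities; under the reading $f_{-1}\dashv f_{*}$ that validates $(h_1)$, $(h_3)$ and continuity, $(h_2)$ stalls exactly where you say. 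So no single reading of $f_{*}$ makes all four parts go through as written, and an additional hypothesis of the kind you suggest (monotonicity of $h_{\text{\tiny{$M$}}}$, or the unit inequality) is genuinely needed; your proposal correctly isolates the defect instead of papering over it.
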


\begin{proof}\
\begin{enumerate}
\item[($h_1)$]  $S\cap h_{\text{\tiny{$L_{f}$}}}(S)= f_{-1} \Big[f_{*}[S]\cap h_{\text{\tiny{$M$}}}\big[ f_{*}[S]\big]\Big]\subseteq f_{-1}\big[f_{*}[s]\big]\subseteq S$, 

for all $S\in \mathcal{S}_\text{{\bsifamily{l}}}^c(L)$.

\item[($h_2)$]  $S\subseteq T$ in $\mathcal{S}_\text{{\bsifamily{l}}}^c(L)$, implies $f_{*}[S]\subseteq f_{*}[T]$, then 

$f_{*}[S]\cap h_{\text{\tiny{$M$}}}\big( f_{*}[S]\big)\subseteq f_{*}[T]\cap h_{\text{\tiny{$M$}}}\big( f_{*}[T]\big)$, therefore

$f_{-1}\Big(f_{*}[S]\cap h_{\text{\tiny{$M$}}}\big( f_{*}[S]\big)\Big)\subseteq f_{-1}\Big(f_{*}[T]\cap h_{\text{\tiny{$M$}}}\big( f_{*}[T]\big)\Big)$,
consequently   $S\cap h_{\text{\tiny{$L_{f}$}}}(S)\subseteq T\cap h_{\text{\tiny{$L_{f}$}}}(T)$,  for all $S,T \in\mathcal{S}_\text{{\bsifamily{l}}}^{c}(L)$;
\item[($h_3)$]  $L\cap h_{\text{\tiny{$L_{f}$}}}(L)= f_{-1} \Big[f_{*}[L]\cap h_{\text{\tiny{$M$}}}\big[ f_{*}[L]\big]\Big]=L$. 
\end{enumerate}
\end{proof}

It is clear that  $h_{\text{\tiny{$L_{f}$}}}(L)$ is the coarsest $\mathfrak h$ operator on $L$ for which the morphism $f$ is $\mathfrak h$-continuous; more precisaly
\begin{prop}\label{h-unique}
Let $(L,h_{\text{\tiny{$L$}}})$ and $(M,h_{\text{\tiny{$M$}}})$ be objects of $\mathbf{\mathfrak h\text{-}Loc}$,and let $N$ be an object of  $\mathbf{Loc}$. For each morphism  $g:N\rightarrow L$ in  $\mathbf{Loc}$ and for\linebreak  
$f:(L,h_{\text{\tiny{$L_{f}$}}})\rightarrow (M,h_{\text{\tiny{$N$}}})$ an $\mathfrak h$-continuous morphism, $g$  is $\mathfrak h$-continuous if and only if $f\centerdot g$ is $\mathfrak h$-continuous.
\end{prop}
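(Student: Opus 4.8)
The plan is to transcribe the proof of Proposition~\ref{unique} into the $\mathfrak h$-setting, the only structural change being that wherever the interior-operator argument silently uses monotonicity of the operator, I must instead invoke axiom~($h_2$), that is, monotonicity of the \emph{compound} assignment $X\mapsto X\cap h_{\text{\tiny{$N$}}}(X)$. The equivalence splits into two implications. The forward one is immediate: since $f$ is $\mathfrak h$-continuous by hypothesis, if $g$ is $\mathfrak h$-continuous then so is $f\centerdot g$ by the proposition on composition of $\mathfrak h$-continuous morphisms. All the content lies in the converse, and throughout I read the operator on $L$ as the initial one $h_{\text{\tiny{$L_{f}$}}}$ of Proposition~\ref{h-ini-cont}, so that all the expressions below land in the appropriate collections of complemented sublocales.

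For the converse, assume $f\centerdot g$ is $\mathfrak h$-continuous, fix $S\in\mathcal{S}_\text{{\bsifamily{l}}}^{c}(L)$, and set $T=f_{*}[S]$. First I would rewrite the left-hand side $g_{-1}[S\cap h_{\text{\tiny{$L_{f}$}}}(S)]$ by combining the identity $S\cap h_{\text{\tiny{$L_{f}$}}}(S)=f_{-1}\big[f_{*}[S]\cap h_{\text{\tiny{$M$}}}(f_{*}[S])\big]$ established in part~($h_1$) of Proposition~\ref{h-ini-cont} with the composition law $(f\centerdot g)_{-1}=g_{-1}\centerdot f_{-1}$; this identifies it with $(f\centerdot g)_{-1}\big[T\cap h_{\text{\tiny{$M$}}}(T)\big]$. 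Applying $\mathfrak h$-continuity of $f\centerdot g$ then bounds this above by $(f\centerdot g)_{-1}[T]\cap h_{\text{\tiny{$N$}}}\big((f\centerdot g)_{-1}[T]\big)$.

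It remains to replace the argument $(f\centerdot g)_{-1}[T]=g_{-1}\big[f_{-1}[f_{*}[S]]\big]$ by $g_{-1}[S]$. The contraction $f_{-1}\centerdot f_{*}\subseteq\mathrm{id}$ (used in part~($h_1$) of Proposition~\ref{h-ini-cont}) together with monotonicity of $g_{-1}$ gives $(f\centerdot g)_{-1}[T]\subseteq g_{-1}[S]$, whereupon axiom~($h_2$) for $h_{\text{\tiny{$N$}}}$ yields $(f\centerdot g)_{-1}[T]\cap h_{\text{\tiny{$N$}}}\big((f\centerdot g)_{-1}[T]\big)\subseteq g_{-1}[S]\cap h_{\text{\tiny{$N$}}}\big(g_{-1}[S]\big)$. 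Chaining the three inclusions produces $g_{-1}[S\cap h_{\text{\tiny{$L_{f}$}}}(S)]\subseteq g_{-1}[S]\cap h_{\text{\tiny{$N$}}}(g_{-1}[S])$, which is exactly the $\mathfrak h$-continuity of $g$.

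The hard part will be precisely this last replacement: because an $\mathfrak h$ operator need not be monotone on its own, the naive imitation of Proposition~\ref{unique}---which tacitly applies monotonicity of $i_{\text{\tiny{$N$}}}$---breaks down, and one is forced to route the comparison through the compound operator and axiom~($h_2$). By contrast, the bookkeeping with the adjunction $f_{*}\dashv f_{-1}$ and with the functoriality $(f\centerdot g)_{-1}=g_{-1}\centerdot f_{-1}$ of preimages is routine.
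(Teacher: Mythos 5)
Your proposal is correct and follows essentially the same route as the paper's own proof: both rewrite $g_{-1}\big[S\cap h_{\text{\tiny{$L_{f}$}}}(S)\big]$ as $(f\centerdot g)_{-1}\big[f_{*}[S]\cap h_{\text{\tiny{$M$}}}(f_{*}[S])\big]$ via Proposition~\ref{h-ini-cont} and the composition law for preimages, apply the $\mathfrak h$-continuity of $f\centerdot g$, and then replace $(f\centerdot g)_{-1}[f_{*}[S]]$ by $g_{-1}[S]$ using the contraction $f_{-1}\centerdot f_{*}\subseteq \mathrm{id}$. Your two additions---making explicit that this last step needs axiom~($h_2$), i.e.\ monotonicity of the compound map $X\mapsto X\cap h_{\text{\tiny{$N$}}}(X)$, which the paper uses silently, and supplying the trivial forward implication that the paper omits---tighten rather than change the argument.
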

\begin{proof}
Suppose that $g\centerdot f$ is $I$-continuous, i. e.
$$(f\centerdot g)_{-1}\big(T\cap h_{\text{\tiny{$M$}}}(T)\big)\subseteq T\cap h_{\text{\tiny{$N$}}}\big( (f\centerdot g)_{-1}(T) \big)$$
 for all $T\in T \in\mathcal{S}_\text{{\bsifamily{l}}}^{c}(N)$. Then, for all $S\in T \in\mathcal{S}_\text{{\bsifamily{l}}}^{c}(L)$, we have
\begin{align*}
 g_{-1}\Big(S\cap \big(h_{\text{\tiny{$L_{f}$}}}(S)\big)\Big)&=g_{-1}\Big(f_{-1}\big( f_{*}(S)\cap h_{\text{\tiny{$M$}}}\centerdot f_{*}(S)\big)=(f\centerdot g)_{-1}\big( f_{*}(S)\cap h_{\text{\tiny{$M$}}}( f_{*}(S)) \big)\\
 &\subseteq f\centerdot g)_{-1}(f_{*}(S) \cap  \Big(h_{\text{\tiny{$N$}}}\big( (f\centerdot g)_{-1}(f_{*}(S) ) \big)\Big)\\
 &=(f\centerdot g)_{-1}(f_{*}(S) ) \cap h_{\text{\tiny{$N$}}}\big( g_{-1}\centerdot f_{-1}\centerdot  f_{*} (S)\big)\\
 &\subseteq  g_{-1}(S)\cap h_{\text{\tiny{$N$}}}\big( g_{-1}(S)\big),\\
\end{align*}
i.e.  $g$  is $I$-continuous.
\end{proof}
As a consequence of corollary(\ref{h-complete}), proposition(\ref{h-ini-cont}) and proposition (\ref{h-unique}) (cf. \cite{AHS} or \cite{JM}), we obtain 
 \begin{theorem}
The forgetful functor $U:\mathbf{\mathfrak h\text{-}Loc}\rightarrow \mathbf{Loc}$ is topological, i.e. the concrete category $\big(\mathbf{\mathfrak h\text{-}Loc},\ U\big)$ is topological.
 \end{theorem}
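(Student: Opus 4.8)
The plan is to verify the defining property of a topological functor from \cite{AHS} (equivalently \cite{JM}): the concrete category $\big(\mathbf{\mathfrak h\text{-}Loc},\ U\big)$ is topological precisely when every $U$-structured source admits a unique $U$-initial lift. A structured source here consists of an object $L$ of $\mathbf{Loc}$ together with a family of $\mathbf{Loc}$-morphisms $\big(f_{i}:L\rightarrow M_{i}\big)_{i\in \Lambda}$ whose codomains carry $\mathfrak h$ operators, so that each $(M_{i},h_{\text{\tiny{$M_{i}$}}})$ is an object of $\mathbf{\mathfrak h\text{-}Loc}$. I must produce the coarsest $\mathfrak h$ operator on $L$ rendering every $f_{i}$ simultaneously $\mathfrak h$-continuous and then establish its initial universal property; the three preceding results are arranged to supply exactly these ingredients.

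First I would treat the single-index case. For each $i$, Proposition~\ref{h-ini-cont} furnishes the operator $h_{\text{\tiny{$L_{f_{i}}$}}}=(f_{i})_{-1}\centerdot h_{\text{\tiny{$M_{i}$}}}\centerdot (f_{i})_{*}$, an $\mathfrak h$ operator on $L$ for which $f_{i}$ is $\mathfrak h$-continuous and which is, as noted there, the coarsest such. I would then assemble these into a single operator by taking their supremum in the fibre: by Corollary~\ref{h-complete} the poset $\mathfrak h(L)$ is a complete lattice, so the join $\widehat{h_{\text{\tiny{$L$}}}}:=\bigvee_{i\in\Lambda} h_{\text{\tiny{$L_{f_{i}}$}}}$ exists and is again an $\mathfrak h$ operator on $L$. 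My claim is that $(L,\widehat{h_{\text{\tiny{$L$}}}})$ is the sought initial lift.

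It then remains to check the two halves of the universal property, and Proposition~\ref{h-unique} is the workhorse. Since $\widehat{h_{\text{\tiny{$L$}}}}\geqslant h_{\text{\tiny{$L_{f_{i}}$}}}$ for every $i$ and $\mathfrak h$-continuity of a morphism is preserved when the operator on the source is enlarged, each $f_{i}:(L,\widehat{h_{\text{\tiny{$L$}}}})\rightarrow (M_{i},h_{\text{\tiny{$M_{i}$}}})$ is $\mathfrak h$-continuous, so $\widehat{h_{\text{\tiny{$L$}}}}$ genuinely lifts the source. For initiality, given any $\mathfrak h$-space $(N,h_{\text{\tiny{$N$}}})$ and a $\mathbf{Loc}$-morphism $g:N\rightarrow L$, I would show that $g:(N,h_{\text{\tiny{$N$}}})\rightarrow(L,\widehat{h_{\text{\tiny{$L$}}}})$ is $\mathfrak h$-continuous if and only if every composite $f_{i}\centerdot g$ is: the forward implication is composition of $\mathfrak h$-continuous morphisms, while the converse follows by applying Proposition~\ref{h-unique} to each $f_{i}$, which yields $\mathfrak h$-continuity of $g$ against each $h_{\text{\tiny{$L_{f_{i}}$}}}$, and then transferring this to the join. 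Uniqueness of the lift is then forced by antisymmetry of the order on $\mathfrak h(L)$.

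The step I expect to be the main obstacle is precisely this last transfer: deducing $\mathfrak h$-continuity of $g$ against $\widehat{h_{\text{\tiny{$L$}}}}=\bigvee_{i} h_{\text{\tiny{$L_{f_{i}}$}}}$ from its $\mathfrak h$-continuity against each $h_{\text{\tiny{$L_{f_{i}}$}}}$. Unpacking condition~(\ref{h-conti}), this requires controlling $g_{-1}\big[S\cap \bigvee_{i} h_{\text{\tiny{$L_{f_{i}}$}}}(S)\big]$ in terms of the individual $g_{-1}\big[S\cap h_{\text{\tiny{$L_{f_{i}}$}}}(S)\big]$, which is delicate because the meet with $S$ need not distribute over a possibly infinite join in the Boolean algebra $\mathcal{S}_\text{{\bsifamily{l}}}^{c}(L)$, and because one must ensure that the join of complemented sublocales stays within $\mathcal{S}_\text{{\bsifamily{l}}}^{c}(L)$. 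This complication is absent in the interior-operator theorem, whose continuity condition carries no meet with $S$; here I would circumvent it by performing the supremum in the fibre $\mathfrak h(L)$, where Corollary~\ref{h-complete} already secures the needed closure, rather than demanding distributivity in $\mathcal{S}_\text{{\bsifamily{l}}}^{c}(L)$ directly—this being exactly the reduction packaged by the criterion cited from \cite{AHS} and \cite{JM}.
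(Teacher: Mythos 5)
Your proposal reconstructs exactly the route the paper itself takes: the paper's entire proof is the single sentence invoking Corollary~\ref{h-complete}, Proposition~\ref{h-ini-cont} and Proposition~\ref{h-unique} together with the criterion from \cite{AHS}, and your assembly --- single-morphism initial lifts from Proposition~\ref{h-ini-cont}, their coarseness and the universal property from Proposition~\ref{h-unique}, fibre-completeness from Corollary~\ref{h-complete}, and a pointwise join over the structured source --- is precisely the argument that citation gestures at, spelled out more explicitly than the paper does.

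However, the step you yourself flag as ``the main obstacle'' is a genuine gap, and your closing remark does not close it. Performing the supremum in the fibre $\mathfrak h(L)$ is not an alternative to the distributivity problem: the join in $\mathfrak h(L)$ is computed pointwise, $\widehat{h}(S)=\bigvee_{i} h_{L_{f_i}}(S)$ (this is how the paper's lattice proposition constructs joins), so verifying that a morphism $g$ all of whose composites $f_i\centerdot g$ are $\mathfrak h$-continuous is itself $\mathfrak h$-continuous against $\widehat{h}$ still requires controlling $g_{-1}\big[S\cap\bigvee_{i} h_{L_{f_i}}(S)\big]$, exactly the quantity you said you would avoid. Two concrete obstructions remain. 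First, even when the meet with the complemented sublocale $S$ distributes over the join, the preimage $g_{-1}[-]$ is only a right Galois adjoint: it preserves intersections but not joins, so $g_{-1}\big[\bigvee_{i}\big(S\cap h_{L_{f_i}}(S)\big)\big]$ may strictly exceed $\bigvee_{i} g_{-1}\big[S\cap h_{L_{f_i}}(S)\big]$, and the required inclusion does not follow from the indexwise continuity assumptions. Second, an infinite join of complemented sublocales need not be complemented, so $\widehat{h}$ need not even take values in the Boolean algebra of complemented sublocales; the ``closure'' you attribute to Corollary~\ref{h-complete} is precisely what that corollary's proof glosses over. Nor does the criterion of \cite{AHS} repackage this away: it demands initial lifts of \emph{arbitrary} structured sources, and fibre-completeness plus single-map initial lifts is not in general sufficient for topologicity. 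To be fair, the paper's own one-line proof is silent on exactly the same points, so your proposal is no weaker than the paper --- but as a standalone proof it leaves the multi-source transfer, and the well-definedness of the join as an operator on complemented sublocales, unestablished.
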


\end{document}